\newtheorem{thm}{Theorem}[section]
\newtheorem{cor}[thm]{Corollary}
\newtheorem{defn}[thm]{Definition}
\newtheorem{lem}[thm]{Lemma}
\newtheorem{ill}{Illustration}
\numberwithin{equation}{section}
\def\ni{\noindent}
\def\N{\mathbb{N}}
\def\H{\mathbb{H}}
\def\cS{\mathcal{S}}
\title{\textbf{\sc  On the Pythagorean Holes of Certain Graphs}}
\author{Johan Kok}
\affil{\small Tshwane Metropolitan Police Department\\ City of Tshwane, Republic of South Africa\\ E-mail: kokkiek2@tshwane.gov.za}
\author{N. K. Sudev}
\affil{\small Department of Mathematics\\ Vidya Academy of Science \& Technology \\ Thalakkottukara, Thrissur - 680501, India.\\ E-mail: sudevnk@gmail.com}
\author{K. P. Chithra}
\affil{\small Naduvath Mana, Nandikkara \\ Thrissur - 680301, India.\\ E-mail: chithrasudev@gmail.com}
\date{}
\begin{document}
\maketitle

\begin{abstract}
A \textit{primitive hole} of a graph $G$ is a cycle of length $3$ in $G$. The number of primitive holes in a given graph $G$ is called the primitive hole number of that graph $G$. The primitive degree of a vertex $v$ of a given graph $G$ is the number of primitive holes incident on the vertex $v$. In this paper, we introduce the notion of Pythagorean holes of graphs and initiate some interesting results on Pythagorean holes in general as well as results in respect of set-graphs and Jaco graphs.   
\end{abstract}

\ni \textbf{Key Words:} Set-graphs, Jaco Graphs, primitive hole, Pythagorean hole, graphical embodiment of a Pythagorean triple.

\vspace{0.2cm}

\ni \textbf{Mathematics Subject Classification:} 05C07, 05C20, 05C38.

\section{Introduction}

For general notations and concepts in graph theory, we refer to \cite{BM}, \cite{FH} and \cite{DBW}. All graphs mentioned in this paper are simple, connected undirected and finite, unless mentioned otherwise. Note that in the construction of a graph through steps and if the meaning of degree is clear at a step, the degree of a vertex $v$ will be denoted $d(v)$. If we refer to degree in the context of a graph $G$, we will denote it as $d_G(v)$.

A \textit{hole} of a simple connected graph $G$ is a chordless cycle $C_n$ , where $n \in  N$, in $G$. A \textit{primitive hole} of a graph $G$ (see \cite{KS1}) is a cycle of length $3$ in $G$. The number of primitive holes in a given graph $G$ is called the \textit{primitive hole number} of $G$ and is denoted by $h(G)$. 

The \textit{primitive degree} (see \cite{KS1}) of a vertex $v$ of a given graph $G$ is the number of primitive holes incident on the vertex $v$ and the primitive degree of the vertex $v$ in the graph $G$ is denoted by $d^p_G(v)$. 



\ni The notion of a set-graph has been introduced in \cite{KS2} as follows. 

\begin{defn}{\rm 
{\rm \cite{KS2}} Let $A^{(n)} = \{a_1, a_2, a_3, \ldots, a_n\}, n\in \N$ be a non-empty set and the $i$-th $s$-element subset of $A^{(n)}$ be denoted by $A_{s,i}^{(n)}$.  Now consider $\cS = \{A_{s,i}^{(n)}: A_{s,i}^{(n)} \subseteq A^{(n)}, A_{s,i}^{(n)} \ne \emptyset\}$. The \textit{set-graph} corresponding to set $A^{(n)}$, denoted $G_{A^{(n)}}$, is defined to be the graph with $V(G_{A^{(n)}}) = \{v_{s,i}: A_{s,i}^{(n)} \in \cS\}$ and $E(G_{A^{(n)}}) = \{v_{s,i}v_{t,j}:~ A_{s,i}^{(n)}\cap A_{t,j}^{(n)}\ne \emptyset\}$, where $s\ne t~ \text{or}~ i\ne j$. }
\end{defn}

The set $A^{(n)}\ne\emptyset$ and if $|A^{(n)}|$ is a singleton, then $G_{A^{(n)}}$ to be the trivial graph. Hence, we need to consider non-empty, non-singleton sets for the studies on set-graphs. 

It is proved in \cite{KS2} that any set-graph $G$ has odd number of vertices. An important property on set-graphs is that the the vertices of a set-graph $G$, corresponding to the sets of equal cardinality, have the same degree. Also, the vertices in a set-graph $G$, corresponding to the singleton subsets of $A^{(n)}$, are pairwise non-adjacent in $G$. 

The following is a relevant result on the minimal and maximal degree of vertices in a set-graph and their relation.

\begin{thm}
{\rm \cite{KS2}} For any vertex $v$ of a set-graph $G=G_{A^{(n)}}$, $2^{n-1}-1\le d_G(v)\le 2^n-2$. That is, $\Delta(G)=2\,\delta(G)$.
\end{thm} 

It has also been proved in \cite{KS2} that there exists a unique vertex $v$ in a set-graph $G_{A^{(n)}}$ having the highest possible degree. Another relevant result reported in the Corrigendum to \cite{KS2} is the following.

\begin{thm}
A set-graph $G_{A^{(n)}}, n \ge 2$ contains exactly $2n-2$ largest complete subgraphs (cliques) $K_{2^{n-1}}$.
\end{thm}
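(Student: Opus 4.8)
The plan is to show that the largest cliques in $G_{A^{(n)}}$ are exactly the vertex sets $C_j = \{v_{s,i} : a_j \in A^{(n)}_{s,i}\}$ for $j = 1, 2, \ldots, n$, together with the $n-2$ further cliques arising from the ``complement'' structure, and then count them. First I would observe that for each fixed element $a_j \in A^{(n)}$, the collection of all non-empty subsets of $A^{(n)}$ containing $a_j$ is in bijection with all subsets of $A^{(n)} \setminus \{a_j\}$, hence has exactly $2^{n-1}$ members; any two such subsets meet (they share $a_j$), so the corresponding vertices induce a complete subgraph $K_{2^{n-1}}$. By Theorem~1.2 no clique can be larger than $\Delta(G) + 1$; one checks that $\Delta(G) = 2^n - 2$ is attained only at the vertex $v_{n,1}$ corresponding to $A^{(n)}$ itself, and a short counting argument shows a clique on $v_{n,1}$ together with a ``maximum'' set of mutually intersecting subsets still has size $2^{n-1}$, so $2^{n-1}$ is the clique number.

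Next I would prove the \emph{uniqueness/classification} part: every clique of size $2^{n-1}$ arises in one of exactly $2n-2$ ways. The key structural claim is that a family $\mathcal{F}$ of $2^{n-1}$ pairwise-intersecting non-empty subsets of an $n$-set is, up to the natural symmetry, either (i) all subsets containing a fixed element $a_j$ (an ``intersecting star''), giving $n$ families, or (ii) a maximal intersecting family of a different combinatorial type. Here I would invoke the classical fact that a maximal intersecting family on an $n$-set has exactly $2^{n-1}$ members, and analyze which of these families are \emph{realized as cliques} in the set-graph — recalling that adjacency in $G_{A^{(n)}}$ requires $A^{(n)}_{s,i} \cap A^{(n)}_{t,j} \neq \emptyset$ with $(s,i)\neq(t,j)$, so a clique is precisely an intersecting family of distinct subsets of size $2^{n-1}$. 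Counting the intersecting families of size $2^{n-1}$ that are \emph{not} stars, and showing there are exactly $n-2$ of them (for instance those generated by fixing a $2$-element or larger ``core'' in a suitably balanced way), yields the total $n + (n-2) = 2n-2$.

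I would carry the steps out in this order: (1) establish that cliques $\leftrightarrow$ intersecting families of distinct non-empty subsets; (2) show the clique number equals $2^{n-1}$, using Theorem~1.2 and the attainment analysis; (3) exhibit the $n$ stars as maximum cliques; (4) classify the remaining maximum intersecting families and count them as $n-2$; (5) conclude $2n-2$, and verify the small case $n = 2$ (where $2n - 2 = 2$ and $G_{A^{(2)}}$ is itself a triangle on $3$ vertices, whose two $K_2$'s... — more carefully, $K_{2^{n-1}} = K_2$ and there are indeed $2$ edges forming maximum cliques) as a sanity check, and $n = 3$ as a second check.

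The main obstacle I expect is step (4): proving that the non-star maximum intersecting families realized in the set-graph number exactly $n-2$, no more and no less. Over-counting is the real danger — two different generating descriptions may yield the same vertex set — so the argument must pin down a \emph{canonical form} for each such family (e.g.\ by its minimal members, or by the intersection of all its members) and show this canonical data ranges over a set of size exactly $n-2$. A secondary subtlety is making sure none of these non-star families coincides with a star, which is where the hypothesis $n \geq 2$ and the precise adjacency condition ($s \neq t$ or $i \neq j$, so distinct subsets of equal size are still adjacent) must be used carefully.
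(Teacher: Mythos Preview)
The paper does not actually prove this theorem; it is merely quoted from the Corrigendum to \cite{KS2} without argument, so there is no proof on the paper's side to compare your proposal against.

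On the substance of your proposal: steps (1)--(3) are sound. Cliques in $G_{A^{(n)}}$ are exactly intersecting families of non-empty subsets of $A^{(n)}$; every maximal intersecting family has size $2^{n-1}$; and the $n$ ``stars'' $C_j=\{S\subseteq A^{(n)}:a_j\in S\}$ are maximum cliques. Your step (4), however, cannot be carried out, and your instinct to flag it as the main obstacle is exactly right: the statement itself is false for $n\ge 4$. For $n=4$, besides the four stars there are \emph{eight} non-star maximal intersecting families, not $n-2=2$. Indeed, a non-star maximal intersecting family on a $4$-set contains no singleton, hence contains all four $3$-subsets and the full set, together with one $2$-subset from each of the three complementary pairs $\{1,2\}/\{3,4\}$, $\{1,3\}/\{2,4\}$, $\{1,4\}/\{2,3\}$. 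Since two $2$-subsets of a $4$-set are disjoint only when they are complementary, \emph{every} one of the $2^3=8$ selections yields a valid intersecting family of size $8$. Thus $G_{A^{(4)}}$ has $4+8=12$ maximum cliques, not $2\cdot 4-2=6$. In general the number of maximum cliques equals the number of maximal intersecting families on an $n$-set (equivalently, self-dual monotone Boolean functions of $n$ variables), which for $n=2,3,4,5,6$ is $2,4,12,81,2646$.

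Your sanity checks at $n=2,3$ would pass only because the formula $2n-2$ happens to coincide with the correct count there; incidentally, $G_{A^{(2)}}$ is the path $P_3$, not a triangle, since $\{a_1\}$ and $\{a_2\}$ are disjoint and hence non-adjacent. The difficulty you anticipated in step (4) is therefore not a technical hurdle to be overcome but a genuine obstruction: no argument can show the non-star count is $n-2$, because from $n=4$ onward it is not.
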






In this paper, we propose a new parameter called the number of Pythagorean holes of a graph. Further to some general results, we also discuss this parameter in respect of set-graphs and Jaco graphs.

\section{Pythagorean Holes of Graphs}

By a Pythagorean triple of positive integers, we mean an ordered triple $(a,b,c)$, where  $a<b<c$, such that $a^2+b^2=c^2$. Also, if $(a,b,c)$ is a Pythagorean triple of positive integers, then for any (positive) integer $k$, the triple $(ka,kb,kc)$ is also a Pythagorean triple. That is, we have $(ka)^2+(kb)^2=(kc)^2$.  

Some results in this paper are Pythagorean triple specific and can be generalised to general ordered triples of positive integers. Some results could be generalised to other ordered triples satisfying other number theoretic conditions as well but for now, our interest lies in the notion of Pythagorean holes in respect of a Pythagorean triple. Using the concepts of Pythagorean triples, we now introduce the notion of Pythagorean holes of a given graph $G$ as follows.
 
\begin{defn} {\rm 
Let $G$ be a non-empty finite graph and let the three vertices  $v_i, v_j, v_k$ in $V(G)$ induce primitive hole in $G$. This primitive hole is said to be a \textit{Pythagorean hole} if $(d_G(v_i), d_G(v_j), d_G(v_k))$ are Pythagorean triple. That is, if  $d^2_G(v_i) + d^2_G(v_j) = d^2_G(v_k)$. 
Let us denote the number of Pythagorean holes of a graph $G$ by $h^p(G)$.}
\end{defn}

We can easily construct a graph with a Pythagorean hole as follows. Let $(n_1,n_2,n_3)$ be a Pythagorean triple. Draw a triangle, say $C_3$, on the vertices $v_1, v_2, v_3$. We extend this triangle to a graph $G$ where $d_G(v_i)=n_i;~ 1\le i\le 3$ as follows. Attach $n_1-2$ pendant vertex to the vertex $v_1$, attach $n_2-2$  pendant vertices to the vertex $v_2$ and add $n_3-2$ pendant vertices to $v_3$ to obtain a new graph graph $G$. Here, $G$ is a unicyclic graph on $n_1+n_2+n_3-3$ vertices and edges each and has one primitive hole. Therefore, the triangle $v_1v_2v_3v_1$ is a Pythagorean hole in $G$. 

By a \textit{minimal graph} with respect to a given property, we mean a graph with minimum order and size satisfying that property. In view of this concept we introduce the following notion.

\begin{defn}{\rm 
A \textit{graphical embodiment} of a given Pythagorean triple is the minimal graph that consists of a Pythagorean hole with respect to that Pythagorean triple.}
\end{defn}

Clearly, the graph $G$ mentioned above is not the graphical embodiment of the Pythagorean triple $(n_1,n_2,n_3)$. Verifying the existence of a graphical embodiment to a given Pythagorean triple is an interesting question that leads to the following theorem.
 
\begin{thm}\label{T-GEPG1}
There exists a unique graphical embodiment for every Pythagorean triple of positive integers. 
\end{thm}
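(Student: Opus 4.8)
The plan is to prove the statement in two parts: first, the \emph{existence} of a graph that contains a Pythagorean hole with respect to a given Pythagorean triple $(n_1,n_2,n_3)$ and has minimum order and size among all such graphs; second, the \emph{uniqueness} of such a minimal graph up to isomorphism. For existence, I would start from the construction already given in the text: a triangle $v_1v_2v_3$ with pendant vertices attached so that $d_G(v_i)=n_i$. This shows the family of graphs possessing a Pythagorean hole for $(n_1,n_2,n_3)$ is non-empty, and since order and size are positive-integer-valued, a minimal element with respect to (order, then size, say) exists. The real content is to identify that minimal element explicitly and then argue uniqueness.

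For the identification, I would argue that in a minimal graph $H$ the Pythagorean hole must essentially \emph{be} the whole graph in the following sense: the three hole vertices $v_1,v_2,v_3$ have prescribed degrees $n_1<n_2<n_3$, so $v_1$ needs $n_1-2$ further neighbours, $v_2$ needs $n_2-2$ further neighbours, and $v_3$ needs $n_3-2$ further neighbours, beyond the two edges of the triangle. To keep the order as small as possible, these ``extra'' neighbours should be shared as much as possible; to keep the size small we must be careful not to create new triangles that are forced by adjacencies (which could in principle change nothing about degrees but we want the cleanest count). The key counting step is: the extra neighbours of $v_1,v_2,v_3$ can overlap, and the minimum number of additional vertices needed is governed by how many vertices must be adjacent to $v_i$ but — crucially — a vertex adjacent to two of the $v_i$, say $v_1$ and $v_2$, together with $v_1v_2$ forms another triangle, which is permissible (the definition forbids nothing), so overlaps are allowed. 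Hence the minimum number of extra vertices is $\max\{n_1-2,\,n_2-2,\,n_3-2\} = n_3-2$: take $n_3-2$ common vertices $u_1,\dots,u_{n_3-2}$, join all of them to $v_3$, join $n_2-2$ of them to $v_2$, and join $n_1-2$ of them to $v_1$. I would then verify this graph has order $n_3+1$ and compute its size, and argue no graph with a Pythagorean hole for $(n_1,n_2,n_3)$ can have fewer vertices (any such graph has at least the three hole vertices plus at least $n_3-2$ distinct neighbours of $v_3$ none of which is $v_1$ or $v_2$ unless $n_3-2 \le 1$, with a small-case check), and among graphs of that order the size is minimised precisely by making the extra vertices pendant-like in the above nested fashion.

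For uniqueness I would show that once the order is pinned to $n_3+1$ and the size to its minimum, the adjacency pattern is forced: $v_3$ must be adjacent to all $n_3-2$ non-hole vertices (it has no other possible neighbours), $v_2$ must be adjacent to exactly $n_2-2$ of them and to nothing else new, $v_1$ to exactly $n_1-2$; and minimality of size forces the sets of extra neighbours to be \emph{nested} ($N(v_1)\setminus\{v_2,v_3\} \subseteq N(v_2)\setminus\{v_1,v_3\} \subseteq N(v_3)\setminus\{v_1,v_2\}$), because any non-nested configuration with the same degrees would require a non-hole vertex of degree $\ge 2$ whose removal-and-reattachment reduces the size — so up to relabelling the non-hole vertices the graph is determined. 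I would also need the ordering $n_1<n_2<n_3$ (from the definition of Pythagorean triple) to make the nesting and the ``$n_3-2$ is the max'' steps clean.

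The main obstacle I anticipate is the lower-bound/minimality argument for the \emph{size}: establishing that no graph on $n_3+1$ vertices containing a Pythagorean hole for $(n_1,n_2,n_3)$ can have fewer edges than the proposed embodiment, and simultaneously that any minimum-size such graph has the nested neighbourhood structure. This requires a careful exchange argument (if the extra-neighbour sets are not nested, exhibit an edge swap preserving all three prescribed degrees and the hole while not increasing — and in the non-nested case strictly decreasing — the edge count, contradicting minimality), plus separate handling of the degenerate small cases such as $(3,4,5)$ where $n_1-2=1$ and some ``overlaps'' are vacuous. Everything else — existence, the explicit order and size formulas, and reading off uniqueness once order and size are fixed — should be routine once this exchange lemma is in place.
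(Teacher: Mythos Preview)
Your construction and the order-minimality argument match the paper's approach exactly: the paper builds the same nested-neighbourhood graph (attach $n_1-2$ new vertices to all of $v_1,v_2,v_3$, then $n_2-n_1$ more to $v_2,v_3$ only, then $n_3-n_2$ more to $v_3$ alone) and then simply asserts that ``any graph other than $G$ will have more vertices than $G$.'' Your lower bound $|V|\ge n_3+1$, obtained by counting the neighbours of $v_3$, is the right way to make that assertion honest, and already goes further than what the paper writes down.

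The genuine gap is in your uniqueness step, exactly where you flag the main obstacle. The exchange argument you sketch cannot work: once the order is pinned at $n_3+1$ and there are no edges among the non-hole vertices, the size is \emph{forced} to equal $3+(n_1-2)+(n_2-2)+(n_3-2)=n_1+n_2+n_3-3$, regardless of whether the extra-neighbour sets are nested. Every edge outside the triangle has one endpoint in $\{v_1,v_2,v_3\}$, and the total number of such edges is determined by $d(v_1)+d(v_2)+d(v_3)$ alone. So a non-nested configuration has the \emph{same} size as the nested one, and no edge swap can strictly decrease it. Concretely, for $(3,4,5)$ on vertices $v_1,v_2,v_3,u_1,u_2,u_3$ with $v_3$ universal: taking $N(v_2)\setminus\{v_1,v_3\}=\{u_1,u_2\}$ and $N(v_1)\setminus\{v_2,v_3\}=\{u_3\}$ yields a non-nested graph of order $6$ and size $9$, identical to the paper's embodiment in both parameters, yet with degree sequence $(5,4,3,2,2,2)$ on the $u_i$ side instead of $(5,4,3,3,2,1)$ --- hence non-isomorphic. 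Under the paper's own definition of ``minimal'' (minimum order and size) both graphs qualify, so uniqueness in that literal sense actually fails; the paper does not confront this and simply declares the nested construction to be \emph{the} embodiment. You have correctly located the soft spot, but the remedy is not a size-reducing exchange lemma --- it would have to be either a sharper notion of minimality (e.g.\ lexicographic on the full degree sequence) or an acknowledgement that the embodiment is only canonical by convention.
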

\begin{proof}
Let $(n_1,n_2,n_3)$ be a Pythagorean triple of positive integers such that $n_1<n_2<n_3$. First, draw a triangle on vertices $v_1, v_2, v_3$. Now, plot $n_1-2$ vertices and attach them to the vertex $v_1$ so that $d(v_1)=n_1$. Now, attach the $n_1-2$ vertices to $v_2$ and $v_3$ also. At this step, $d(v_2)=d(v_3)=n_1$. Now, the $n_2-n_1$ additional edges are required to be incident on the vertex $v_2$. Hence, plot new $n_2-n_1$ vertices and attach them to $v_2$ and $v_3$. Now, $d(v_2)=n_2$, as required. But, here $d(v_3)=n_2$ and additionally $n_3-n_2$ edges are to be incident on $v_3$. Hence, create new $n_3-n_2$ vertices attach them to $v_3$ so that $d(v_3)=n_3$. In the resultant graph $G$, $d_G(v_1)^2+d_G(v_2)^2=d_G(v_3)^2$. Hence, the triangle $v_1v_2v_3v_1$ is a Pythagorean hole in $G$. Clearly, this graph is the smallest graph with a Pythagorean hole corresponding to the given Pythagorean triple. Any graph other than $G$ will have more vertices than $G$. Hence, $G$ is a unique graphical embodiment of the given Pythagorean triple.
\end{proof}

The graph $G$ in Figure \ref{fig-2}  is an example for a graph containing a Pythagorean hole corresponding to a Pythagorean triple $(3,4,5)$. The graph $G$ has the minimum number of vertices (that is, $6$ vertices) required to contain a Pythagorean hole. 

\begin{figure}[h!]
\centering
\includegraphics[width=0.6\linewidth]{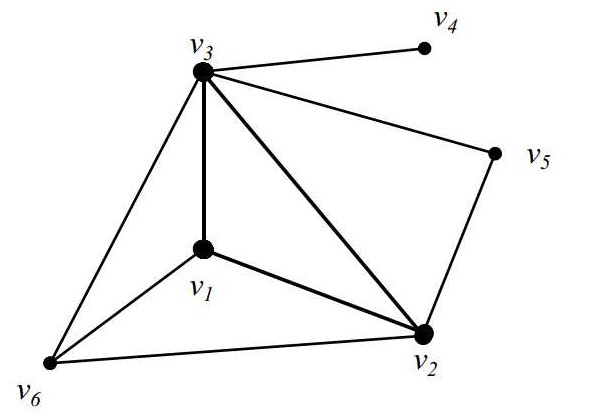}
\caption{The graphical embodiment of the Pythagorean triple $(3,4,5)$.}
\label{fig-2}
\end{figure}

From figure \ref{fig-2}, it can be noted that the graphical embodiment of the Pythagorean triple of $(3,4,5)$ has two Pythagorean holes. The uniqueness of the Pythagorean hole in a graphical embodiment of a given Pythagorean triple, other than triple $(3,4,5)$, is established in the following theorem.

\begin{thm}\label{Thm-2.4}
The graphical embodiment of any Pythagorean triple of positive integers $(n_1,n_2,n_3)\ne(3,4,5)$, consists of a unique Pythagorean hole in it.
\end{thm}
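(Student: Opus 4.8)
The plan is to analyze the structure of the graphical embodiment $G$ constructed in the proof of Theorem~\ref{T-GEPG1} and count all triangles in it, showing that for $(n_1,n_2,n_3)\ne(3,4,5)$ the only triangle whose degree-triple is Pythagorean is the original triangle $v_1v_2v_3v_1$. Recall that in $G$ the vertices are: the three ``core'' vertices $v_1,v_2,v_3$ forming a triangle; a set $P_1$ of $n_1-2$ vertices adjacent to all of $v_1,v_2,v_3$; a set $P_2$ of $n_2-n_1$ vertices adjacent to $v_2$ and $v_3$; and a set $P_3$ of $n_3-n_2$ vertices adjacent only to $v_3$. Every vertex in $P_1$ has degree $3$, every vertex in $P_2$ has degree $2$, and every vertex in $P_3$ has degree $1$; the core degrees are $n_1,n_2,n_3$.

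\textbf{Enumerating the triangles.} First I would observe that any triangle must use at least two core vertices, since $P_1\cup P_2\cup P_3$ is an independent set (no two added vertices are adjacent). A triangle on exactly two core vertices $v_a,v_b$ together with an added vertex $w$ requires $w$ adjacent to both $v_a$ and $v_b$: this happens only for $\{v_a,v_b\}=\{v_1,v_2\}$, $\{v_1,v_3\}$ or $\{v_2,v_3\}$ with $w\in P_1$, or for $\{v_a,v_b\}=\{v_2,v_3\}$ with $w\in P_2$. Such triangles have degree-triples of the form $(3, n_a, n_b)$ or $(2, n_2, n_3)$ — note $w$ has degree $3$ or $2$. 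The remaining triangle is $v_1v_2v_3$ itself with triple $(n_1,n_2,n_3)$, which is Pythagorean by construction. So it remains to check that none of the ``mixed'' triples can be Pythagorean.

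\textbf{Ruling out the mixed triangles.} Here the key steps are: (i) for a triple $(3,n_a,n_b)$ to be Pythagorean we would need $\{9, n_a^2, n_b^2\}$ to satisfy $x^2+y^2=z^2$; since $3<n_1<n_2<n_3$ forces the two core degrees to be the larger two entries, we would need $9+n_a^2=n_b^2$, i.e. $(n_b-n_a)(n_b+n_a)=9$, giving $n_b-n_a=1, n_b+n_a=9$, hence $\{n_a,n_b\}=\{4,5\}$ — which returns us to the excluded triple $(3,4,5)$ and moreover requires $n_1=3$; a short case check shows that for $(n_1,n_2,n_3)\ne(3,4,5)$ no such sub-triple of core degrees equals $\{4,5\}$ with $3$ also present as a vertex degree in a common triangle, so this is excluded. (The only primitive Pythagorean triple with smallest entry $3$ is $(3,4,5)$ itself.) Similarly (ii) for a triple $(2,n_2,n_3)$ we would need $4+n_2^2=n_3^2$, i.e. $(n_3-n_2)(n_3+n_2)=4$, which has no solution in positive integers with $n_2\ge 2$ (the factor pairs $1\cdot4$ and $2\cdot2$ force $n_2=3/2$ or $n_2=0$). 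Degenerate small cases where $n_1=3$ or $n_1=4$ coincide with a $w$-degree must be inspected directly, but in each the resulting triple is either non-Pythagorean or is $(3,4,5)$.

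\textbf{The main obstacle} I anticipate is bookkeeping the edge cases where $n_1$ itself equals $2$ or $3$ (so that a core vertex and an added vertex share a degree value, potentially producing extra ``coincidental'' Pythagorean-looking triples) and confirming there is no Pythagorean triple containing $2$ — this is the elementary number-theoretic heart, resolved by the factorization $(n_3-n_2)(n_3+n_2)=4$ having no valid solution, and by the fact that $(3,4,5)$ is the unique Pythagorean triple with least term $3$. Once these two number-theoretic facts are pinned down, the rest is the routine triangle enumeration above, and the conclusion $h^p(G)=1$ for all $(n_1,n_2,n_3)\ne(3,4,5)$ follows.
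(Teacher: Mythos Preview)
Your approach is essentially the same as the paper's: both identify the degrees of the added vertices as $3$, $2$, or $1$ (according to membership in $P_1$, $P_2$, $P_3$), note that these vertices form an independent set, and conclude that only the triangle $v_1v_2v_3$ can be Pythagorean. Your version is in fact more complete than the paper's, which simply asserts the conclusion after listing the degrees; you actually supply the factorization arguments $(n_b-n_a)(n_b+n_a)=9$ and $(n_3-n_2)(n_3+n_2)=4$ needed to rule out the mixed triangles and to isolate $(3,4,5)$ as the sole exception. One small streamlining: rather than leaving $n_1\in\{3,4\}$ as edge cases to inspect, you can note at the outset that the hypothesis $(n_1,n_2,n_3)\neq(3,4,5)$ forces $n_1\ge 5$ (your own factorization shows $n_1=3$ gives only $(3,4,5)$, and $n_1=4$ gives $n_2=3<n_1$), which makes $3<n_1$ automatic throughout.
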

\begin{proof}
Let $(n_1,n_2,n_3)\ne (3,4,5)$ be a Pythagorean triple of positive integers and let $G$ be a graphical embodiment of this triple obtained as explained in Theorem \ref{T-GEPG1}. Let $v_1v_2v_3v_1$ be a Pythagorean hole in $G$ such that $d_G(v_1)=n_1,d_G(v_2)=n_2 ~\text{and}~ d_G(v_3)=n_3$. 

It is to be noted that all new vertices that are adjacent to $v_1$ in $G$ will be adjacent to both $v_2$ and $v_3$ also and hence are of degree $3$. Similarly, all new vertices that are adjacent to $v_2$ is adjacent to $v_3$ also. Therefore, the degree of the vertices that are adjacent to $v_2$, but not to $v_1$, is $2$ and the degree of the vertices that are adjacent only to $v_3$ is $1$. Also, no two of these new vertices are mutually adjacent. Hence, for any three vertices $v_i,v_j,v_k$, other than $v_1,v_2,v_3$, do not satisfy the condition $d_G(v_i)^2+d_G(v_j)^2=d_G(v_k)^2$. Therefore, the triangle $v_1v_2v_3$ is the unique Pythagorean hole in $G$.
\end{proof}

As stated earlier both Theorem \ref{T-GEPG1} and Theorem \ref{Thm-2.4} can be generalised to other triples with other number theoretic properties.The characteristics of the graphical embodiment of a Pythagorean triple seems to be much promising in this context. The size and order of the graphical embodiment are determined in the following result.

\begin{thm}\label{T-PGO1}
Let $G$ be the graphical embodiment of a given Pythagorean triple $(n_1,n_2,n_3)$, with usual notations. Then, 
\begin{enumerate}\itemsep0mm
\item[(i)] the order (the number of vertices) of $G$ is one greater than the highest number in the corresponding Pythagorean triple. 
\item[(ii)] the size (the number of edges) of $G$ is three less than the sum of numbers in the corresponding Pythagorean triple.  
\end{enumerate}
\end{thm}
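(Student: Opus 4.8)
The plan is to read off both quantities directly from the explicit construction of the graphical embodiment given in the proof of Theorem \ref{T-GEPG1}, since that construction describes $G$ completely as a triangle $v_1v_2v_3$ together with three batches of newly added ``pendant-type'' vertices. First I would recount the vertices. The triangle contributes $3$ vertices. In the first stage we add $n_1-2$ vertices adjacent to all of $v_1,v_2,v_3$; in the second stage $n_2-n_1$ vertices adjacent to $v_2$ and $v_3$; in the third stage $n_3-n_2$ vertices adjacent to $v_3$ only. Summing, $|V(G)| = 3 + (n_1-2) + (n_2-n_1) + (n_3-n_2) = n_3 + 1$, which is statement (i): one more than the largest member of the triple.

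Next I would count the edges. It is cleanest to use the handshaking lemma together with the degree data already established in Theorem \ref{Thm-2.4}: $d_G(v_1)=n_1$, $d_G(v_2)=n_2$, $d_G(v_3)=n_3$; the $n_1-2$ first-stage vertices have degree $3$; the $n_2-n_1$ second-stage vertices have degree $2$; and the $n_3-n_2$ third-stage vertices have degree $1$. Thus
\begin{align*}
2|E(G)| &= n_1 + n_2 + n_3 + 3(n_1-2) + 2(n_2-n_1) + 1\cdot(n_3-n_2).
\end{align*}
Simplifying the right-hand side gives $2(n_1+n_2+n_3) - 6$, hence $|E(G)| = n_1+n_2+n_3-3$, which is statement (ii). Alternatively one can count edges stage by stage: $3$ edges in the triangle, $3(n_1-2)$ edges from the first batch, $2(n_2-n_1)$ from the second, and $(n_3-n_2)$ from the third, which telescopes to the same total; I would probably present the degree-sum argument and mention the direct count as a check.

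There is essentially no hard step here — the result is a bookkeeping consequence of the construction in Theorem \ref{T-GEPG1}. The only point requiring a little care is to make sure the three batches of added vertices are genuinely disjoint and that no edges are double-counted (in particular that the construction never adds an edge between two new vertices, and never revisits $v_1v_2$, $v_1v_3$, or $v_2v_3$ after the initial triangle); this is already implicit in the description of $G$, so I would just remark on it in passing. It is also worth noting in the write-up that the formula is consistent with the $(3,4,5)$ case: there $|V(G)| = 5+1 = 6$ and $|E(G)| = 3+4+5-3 = 9$, matching the six-vertex graph in Figure \ref{fig-2}.
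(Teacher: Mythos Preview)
Your proposal is correct and follows essentially the same approach as the paper: the vertex count is the identical telescoping sum $3+(n_1-2)+(n_2-n_1)+(n_3-n_2)=n_3+1$, and the edge count is obtained via the handshaking lemma from exactly the same degree data, yielding $|E(G)|=n_1+n_2+n_3-3$. Your additional sanity check against the $(3,4,5)$ example and the remark on disjointness of the batches are nice touches but not present in the paper's version.
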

\begin{proof}
Let $G$ be a graphical embodiment of a Pythagorean triple $(n_1,n_2,n_3)$. Let $v_1,v_2,v_3$ be the vertices with $d_G(v_1)=n_1, d_G(v_2)=n_2, ~ \text{and} ~ d_G(v_3)=n_3$. As explained in Theorem \ref{T-GEPG1}, $n_1-2$ vertices are attached to $v_1,v_2, ~ \text{and} ~  v_3$, further $n_2-n_1$ vertices are attached to $v_2  ~ \text{and} ~ v_3$ and $n_3-n_2$ pendant vertices are attached to $v+3$. Then, 
\begin{enumerate}
\item[(i)]  The number of vertices in $G$ is $|V(G)|=3+n_1-2+n_2-n_1+n_3-n_2=n_3+1$. That is, the order of the graphical embodiment is one greater than the highest number in the corresponding Pythagorean triple.

\item[(ii)] Let $V_1$ be the set of all newly introduced vertices which are adjacent to all three vertices $v_1, v_2,  ~ \text{and} ~ v_3$. Therefore, for all vertices $x$ in $V_1$, we have $d_G(x)=3$. Therefore, $\sum\limits_{x\in V_1}d_G(x)=3(n_1-2)$. Similarly, let $V_2$ be the set of new vertices which are adjacent to $v_2,  ~ \text{and} ~ v_3$. For all vertices $y$ in $V_2$, we have $d_G(y)=3$. Therefore, $\sum\limits_{y\in V_2}d_G(y)=2(n_2-n_1)$. Also, let $V_3$ be the set of new vertices that are adjacent to $v_3$ only. Here, for all vertex $z$ in $V_3$, we have $d_G(z)= 1$ and hence $\sum\limits_{z\in V_3}d_G(z)=(n_3-n_2)$. 

Therefore, $\sum\limits_{v\in V(G)}d_G(v)=n_1+n_2+n_3+3(n_1-2)+2(n_2-n_1)+(n_3-n_2) = 2(n_1+n_2+n_3-3)$. Since for any connected graph $G$, $|E(G)|=\frac{1}{2}\sum\limits_{v\in V(G)}d_G(v)$, we have $|E(G)|=n_1+n_2+n_3-3$. That is, the size of $G$ is three less than the sum of numbers in the Pythagorean triple.
\end{enumerate}
This completes the proof.
\end{proof}

\begin{thm}\label{T-PGO2}
The primitive hole number of the graphical embodiment $G$  of a Pythagorean triple is $h(G)=2n_1+n_2-5$.
\end{thm}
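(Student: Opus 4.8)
The plan is to count primitive holes in the graphical embodiment $G$ directly from the explicit structure given in Theorem~\ref{T-GEPG1}. Recall $G$ is built from the triangle $v_1v_2v_3$ by adding three families of new (pairwise non-adjacent) vertices: a set $V_1$ of $n_1-2$ vertices adjacent to all of $v_1,v_2,v_3$; a set $V_2$ of $n_2-n_1$ vertices adjacent to $v_2$ and $v_3$ only; and a set $V_3$ of $n_3-n_2$ pendant vertices adjacent to $v_3$ only. Since no two new vertices are adjacent, every triangle in $G$ must contain at least two of $v_1,v_2,v_3$, and in fact every triangle uses an edge of the central triangle together with a common neighbour of its two endpoints (or is the central triangle itself).

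The key step is therefore to enumerate, for each edge $v_iv_j$ of the central triangle, the number of vertices adjacent to both $v_i$ and $v_j$ (equivalently, the number of triangles through that edge, discounting double-counting of the triangle $v_1v_2v_3$). First I would observe that a vertex outside $\{v_1,v_2,v_3\}$ is a common neighbour of $v_1$ and $v_2$ exactly when it lies in $V_1$ (contributing $n_1-2$ triangles on edge $v_1v_2$, plus the central triangle); a common neighbour of $v_1$ and $v_3$ again lies in $V_1$ (another $n_1-2$); and a common neighbour of $v_2$ and $v_3$ lies in $V_1\cup V_2$ (contributing $(n_1-2)+(n_2-n_1)=n_2-2$). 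Adding the one central triangle $v_1v_2v_3$ and being careful not to count it three times, the total is $h(G) = 1 + (n_1-2) + (n_1-2) + (n_2-2) = 2n_1 + n_2 - 5$, as claimed. I would present this as: $h(G)$ equals the number of triangles containing at least one edge of $C_3=v_1v_2v_3$, namely $1$ for the triangle $v_1v_2v_3$ itself, plus $(n_1-2)$ triangles of the form $v_1v_2x$ with $x\in V_1$, plus $(n_1-2)$ triangles $v_1v_3x$ with $x\in V_1$, plus $(n_2-2)$ triangles $v_2v_3x$ with $x\in V_1\cup V_2$.

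The one genuine subtlety — the main obstacle — is verifying that there are \emph{no other} triangles, i.e.\ that $V_3$ contributes nothing and that the three families above are exhaustive and non-overlapping as counted. This follows because $V_3$-vertices are pendant (degree $1$, so lie in no triangle), because the new vertices form an independent set (so no triangle has two new vertices), and because each $v_iv_jx$ triangle is counted once for a unique unordered pair $\{i,j\}$ and a unique $x$. I would also double-check the edge-case consistency with the $(3,4,5)$ embodiment: there $n_1=3,n_2=4$, giving $h(G)=2\cdot3+4-5=5$, which should match a direct count on Figure~\ref{fig-2}, and note that although $(3,4,5)$ has two Pythagorean holes (not one), the primitive hole count formula still applies uniformly since it does not depend on the degree condition. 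Finally I would remark that the argument is purely combinatorial and uses only the construction of $G$, not the Pythagorean property, so the same count holds for the embodiment of any increasing triple.
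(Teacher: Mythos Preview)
Your proof is correct and follows essentially the same approach as the paper: both arguments rest on the observation that the new vertices are mutually non-adjacent, so every triangle uses at least two of $v_1,v_2,v_3$, and then count directly. The only cosmetic difference is that the paper groups the count by new vertex (each $x\in V_1$ contributes $3$ triangles, each $x\in V_2$ contributes $1$, giving $1+3(n_1-2)+(n_2-n_1)$), whereas you group by central edge; the two decompositions are trivially equivalent.
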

\begin{proof}
Let $G$ be a graphical embodiment of a Pythagorean triple $(n_1,n_2,n_3)$. Let $v_1,v_2,v_3$ be the vertices with $d_G(v_1)=n_1, d_G(v_2)=n_2, ~ \text{and} ~ d_G(v_3)=n_3$. As explained in Theorem \ref{T-GEPG1}, $n_1-2$ vertices are attached to $v_1,v_2$ and $v_3$, further $n_2-n_1$ vertices are attached to $v_2  ~ \text{and} ~ v_3$ and $n_3-n_2$ pendant vertices are attached to $v+3$. Let $V_1, V_2,V_3$ be the set of vertices as explained in the previous theorem.

Then, every vertex in $V_1$ forms a triangle with any two vertices among $v_1,v_2$ and $v_3$. Hence each vertex in $V_1$ corresponds to three triangles in $G$. Therefore, the total number of such triangles is $3(n_1-2)$. Similarly, every vertex in $V_2$, being adjacent only to $v_2$ and $v_3$, forms a triangle in $G$ and no vertex in $V_3$ is a part of a triangle in $G$, Therefore, the total number of triangles in $G$ is $h(G)=1+ 3(n_1-2)+(n_2-n_1)=2n_1+n_2-5$. This completes the proof.  
\end{proof}

The following theorem discusses certain parameters of the graphical embodiments of the given  Pythagorean triples. 

\begin{thm}\label{T-PGO3}
Let $G$ be the embodiment of a Pythagorean triple $(n_1,n_2,n_3)$. Then we have
\begin{enumerate}\itemsep0mm
\item[(i)] the chromatic number of $G$ is $\chi(G)=4$.
\item[(ii)] the independence number of $G$ is two less than the highest number in the Pythagorean triple. That is, $\alpha(G)=n_3-2$.
\item[(iii)] the covering number of $G$ is $\beta(G)=3$.
\item[(iv)] the domination number of $G$ is $\gamma(G)=1$.
\end{enumerate}
\end{thm}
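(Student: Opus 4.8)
The plan is to exploit the very explicit structure of $G$ furnished by Theorem~\ref{T-GEPG1}: the graph $G$ consists of the triangle $v_1v_2v_3$ together with three pairwise ``new'' vertex sets, namely $V_1$ (the $n_1-2$ vertices joined to all of $v_1,v_2,v_3$), $V_2$ (the $n_2-n_1$ vertices joined only to $v_2$ and $v_3$) and $V_3$ (the $n_3-n_2$ pendant vertices at $v_3$), with no edges inside $V_1\cup V_2\cup V_3$. Since the least Pythagorean triple is $(3,4,5)$ we always have $n_1\ge 3$, hence $V_1\ne\emptyset$; this is the only positivity fact I will need.

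For (i) I would first note that the triangle $v_1v_2v_3$ together with any $x\in V_1$ spans a $K_4$, so $\chi(G)\ge 4$. For the reverse inequality I would exhibit an explicit proper $4$-colouring: colour $v_1,v_2,v_3$ with colours $1,2,3$ respectively, colour every vertex of $V_1$ with colour $4$, and every vertex of $V_2\cup V_3$ with colour $1$. Because $V_1,V_2,V_3$ are independent and each new vertex misses the colour $1$ which is used on at most one of its neighbours (and that neighbour is $v_1$, which no vertex of $V_2\cup V_3$ sees), this colouring is proper, so $\chi(G)=4$.

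For (iii) I would observe directly that $\{v_1,v_2,v_3\}$ is a vertex cover of $G$ — every edge of $G$ has an end in the triangle — so $\beta(G)\le 3$, while the $K_4$ found above forces $\beta(G)\ge 3$; hence $\beta(G)=3$. Statement (ii) then follows immediately from the Gallai identity $\alpha(G)+\beta(G)=|V(G)|$ together with Theorem~\ref{T-PGO1}(i), giving $\alpha(G)=(n_3+1)-3=n_3-2$; alternatively one checks directly that $V_1\cup V_2\cup V_3$ is an independent set of size $n_3-2$ and that it is maximum by a short case analysis on how many of $v_1,v_2,v_3$ may lie in an independent set. For (iv) I would simply note that $v_3$ is adjacent to $v_1,v_2$ and to every vertex of $V_1\cup V_2\cup V_3$, i.e.\ $v_3$ is a universal vertex, so $\{v_3\}$ dominates $G$ and $\gamma(G)=1$.

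The only place needing genuine (though still elementary) care is the maximality claim for the independent set in part (ii) when proved directly rather than via Gallai: an independent set of size $n_3-1$ would omit at most two vertices of $G$, and since at most one of $v_1,v_2,v_3$ can belong to an independent set, such a set would contain all of $V_1\cup V_2\cup V_3$ together with one $v_i$ — which is impossible because that $v_i$ is adjacent to the nonempty set $V_1$. Everything else is a direct verification from the construction, and I would note in passing that the same arguments apply verbatim to the graphical embodiment of any ordered triple $(n_1,n_2,n_3)$ with $3\le n_1<n_2<n_3$.
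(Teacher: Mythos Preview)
Your proof is correct and follows essentially the same plan as the paper's, with one small but tidy inversion: the paper first argues directly that $V_1\cup V_2\cup V_3$ is a maximum independent set to get $\alpha(G)=n_3-2$ and then deduces $\beta(G)=3$ from the Gallai identity, whereas you first observe that $\{v_1,v_2,v_3\}$ is a vertex cover (and that the $K_4$ forces $\beta(G)\ge 3$) and then deduce $\alpha(G)$ from Gallai. Your order has the advantage that the maximality of the independent set never needs a separate case analysis. You are also more explicit than the paper about the lower bound $\chi(G)\ge 4$, which you pin on the $K_4$ spanned by $v_1,v_2,v_3$ and any $x\in V_1$; the paper only exhibits the $4$-colouring.
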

\begin{proof}
Let $G$ be a graphical embodiment of a Pythagorean triple $(n_1,n_2,n_3)$ in which $v_1,v_2,v_3$ are the vertices with $d_G(v_1)=n_1, d_G(v_2)=n_2, ~ \text{and} ~ d_G(v_3)=n_3$. Let $V_1$ be the set of all new vertices that are attached to $v_1, v_2$,  and $v_3$, $V_2$ be the set of all new vertices that are attached to $v_2$,  and $v_3$ and $V_3$ be the set of new vertices that are attached only to $v_3$. Then,      

\begin{enumerate}\itemsep0mm
\item[(i)] Color $v_1$ by the color $c_1$, $v_2$ by the color $c_2$ and $v_3$ by the color $c_3$. Since all vertices in the set $V_1$ and are adjacent to the above three colors can not be used for coloring the vertices of $V_1$. Since no two vertices in $V_1$ are mutually adjacent in $G$, all vertices in $V_1$ can be colored using a fourth color $c_4$. Since all vertices in $V_2$ are mutually non-adjacent in $G$ and are not adjacent to the vertex $v_1$, we can use the color $c_1$ for coloring all the vertices in $V_2$. Since all the vertices in $V_3$ are adjacent to $v_3$ only and are pairwise non-adjacent in $G$, we can color them using either the color $c_1$ or the color $c_2$. Therefore, the minimal proper coloring of $G$ contains for colors. That is, $\chi(G)=4$.

\item[(ii)] All the newly introduced vertices in $V_1,V_2$ and $V_3$ are mutually non-adjacent and hence $V_1\cup V_2\cup V_3$ is the maximal independent set in $G$. Therefore, $\alpha(G)= n_1-2+n_2-n_1+n_3-n_2=n_3-2$.

\item[(iii)] The relation between the independence number and the covering number of a connected graph is $\alpha(G)+\beta(G)=|V(G)|$. By Theorem \ref{T-PGO1}, $|V(G)|=n_3+1$ and by previous result, $\alpha(G)=n_3-2$. Therefore, the covering number of $G$ is $n_3+1-(n_3-2)=3$.

\item[(iv)] Clearly, the vertex $v_3$ is adjacent to all other vertices in the graphical embodiment $G$, we have $\gamma(G) = 1$. 
\end{enumerate} 
This completes the proof.
\end{proof}

It is noted from Theorem \ref{T-PGO3}, the chromatic number, covering number and domination number of the graphical embodiment of any Pythagorean triple are always the same.

An immediate consequence of the general Pythagorean property is that the vertices of Pythagorean holes can be mapped onto interesting Euclidean geometric objects which we can construct along the sides of a right angled triangle or along the surfaces of the corresponding right angled prism.

\vspace{0.1cm}
\begin{ill}{\rm 
For real $x$, let $d_G(v_1) = +\sqrt{(\frac{a}{2})^2 - x^2}$,   $d_G(v_2) = +\sqrt{(\frac{b}{2})^2 - x^2}$ and  $d_G(v_3) = +\sqrt{(\frac{c}{2})^2 - x^2}$, where $(a,b,c)$ is a Pythagorean triple of positive integers. Then, 
\begin{eqnarray*}
d^2_G(v_1)  =  \int\limits_{-\frac{a}{2}}^{\frac{a}{2}}\sqrt{(\frac{a}{2})^2 - x^2}dx =  \frac{1}{2}\pi(\frac{a}{2})^2 =  \frac{1}{8}\pi a^2 \\
d^2_G(v_2)  =  \int\limits_{-\frac{b}{2}}^{\frac{b}{2}}\sqrt{(\frac{b}{2})^2 - x^2}dx =  \frac{1}{2}\pi(\frac{b}{2})^2  =  \frac{1}{8}\pi b^2 \\
d^2_G(v_3)  =  \int\limits_{-\frac{c}{2}}^{\frac{c}{2}}\sqrt{(\frac{c}{2})^2 - x^2}dx =  \frac{1}{2}\pi (\frac{c}{2})^2 =  \frac{1}{8}\pi c^2.
\end{eqnarray*}

\ni Since $\frac{1}{8}\pi$ is a constant, we have

\begin{equation*}
d^2_G(v_1)+d^2_G(v_2)= \frac{1}{8}\pi(a^2+b^2)= \frac{1}{8}\pi c^2 = d^2_G(v_3).
\end{equation*}

That is, the Pythagorean property holds for the triple $(d_G(v_1),d_G(v_2),d_G(v_3))$. Geometrically it means that the area of the semi-circle with length of hypotenuse as its diameter is equal to the sum of the respective areas of the semi-circles with lengths of the other two sides of a right angled triangle as the diameters.}
\end{ill}

\begin{ill}{\rm 
For an arbitrary real number $x$, let  $d_G(v_1) = x + bc$, $d_G(v_2) = x + ac$ and $d_G(v_3)= x + 2ab$, where $(a,b,c)$ is a Pythagorean triple of positive integers. Then, 
\begin{eqnarray*}
d^2_G(v_1) = \int_{0}^{a}(x +bc)dx = \frac{1}{2} a^2 + abc,\\
d^2_G(v_2) = \int_{0}^{b}(x +ac)dx = \frac{1}{2} b^2 + abc,\\ 
d^2_G(v_3) = \int_{0}^{c}(x + 2ab)dx = \frac{1}{2} c^2 + 2abc.
\end{eqnarray*}

Since $\frac{1}{2} a^2 + abc + \frac{1}{2} b^2 + abc = \frac{1}{2} c^2 + 2abc$, it implies geometrically that the area under the straight line $f(x) = x + 2ab, x \in \mathbb{R}$ between the limits $x=0$ and $x=c$ is equal to the sum of the respective areas under the straight lines $f(x) = x + bc, x \in \mathbb{R}$ between the limits $x=0$ and $x=a$ and $f(x) = x + ac, x \in \mathbb{R}$ between the limits $x=0$ and $x=b$ in respect of a right angled triangle.}
\end{ill}

Clearly, we can find many such mappings and it would be worthy to find the applications of these mappings.

\section{On Pythagorean Holes of Set-Graphs}

\ni The following result is on the degree sequence of set-graphs.

\begin{lem}
Consider the degree sequence $(d_1,d_2,d_3,\ldots, d_r)$ of the set-graph $G_{A^{(n)}}$. Then, for any triple $(d_i, d_j, d_k)$, where $d_i < d_j < d_k$, in that degree sequence, $d_i + d_j > d_k$.
\end{lem}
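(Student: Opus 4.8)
The plan is to exploit Theorem 1.1, which tells us that every degree in a set-graph $G=G_{A^{(n)}}$ lies in the interval $[2^{n-1}-1,\,2^n-2]$, i.e. between $\delta(G)$ and $\Delta(G)=2\,\delta(G)$. So for any triple $(d_i,d_j,d_k)$ from the degree sequence with $d_i<d_j<d_k$ we have the crude bounds $d_i\ge 2^{n-1}-1$, $d_j\ge 2^{n-1}-1$ and $d_k\le 2^n-2$. Adding the first two gives $d_i+d_j\ge 2(2^{n-1}-1)=2^n-2\ge d_k$, which is almost the desired strict inequality; the only thing to rule out is equality throughout.

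Next I would handle the boundary case. Equality $d_i+d_j=d_k$ would force $d_i=d_j=2^{n-1}-1$ and $d_k=2^n-2$ simultaneously. But $d_i<d_j$ in the hypothesis of the lemma, so $d_i=d_j$ is impossible, and strictness follows. (If one prefers to read the degree sequence as a multiset where a ``triple'' could repeat a value, one would instead note that $d_i=d_j$ contradicts the stated ordering $d_i<d_j<d_k$; in any case the strict chain $d_i<d_j$ kills the equality.) Hence $d_i+d_j\ge 2^n-2\ge d_k$ with at least one inequality strict, giving $d_i+d_j>d_k$.

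I should double-check that the needed ingredient — that the minimum degree is exactly $2^{n-1}-1$ and the maximum exactly $2^n-2$, with $\Delta=2\delta$ — is precisely the content of Theorem 1.1 as quoted, so no extra work on the structure of set-graphs is required; we only need the degree interval, not which subsets realize the extremes. I would also remark for safety that $n\ge 2$ here (since singleton $A^{(n)}$ gives the trivial graph and is excluded), so $2^{n-1}-1\ge 1$ and the bounds are meaningful.

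The main (and really the only) obstacle is the degenerate equality case, and it is dispatched immediately by the strict ordering $d_i<d_j$ in the hypothesis. So the proof is short: quote Theorem 1.1 for the two lower bounds on $d_i,d_j$ and the upper bound on $d_k$, add, and observe that $d_i<d_j$ prevents the sum from collapsing to exactly $2^n-2$. I expect the write-up to be three or four lines.
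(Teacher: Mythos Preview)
Your argument is correct and is essentially the paper's own proof: quote Theorem~1.2 to get $\delta(G)\le d_i<d_j$ and $d_k\le\Delta(G)=2\delta(G)$, so $d_i+d_j>2\delta(G)\ge d_k$. The paper compresses this into a single line, while you spell out explicitly why the strict ordering $d_i<d_j$ forces $d_i+d_j>2\delta(G)$ rather than merely $\ge$; that extra care is warranted and makes your write-up cleaner.
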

\begin{proof}
Recall the theorem that for any set-graph $G=G_{A^{(n)}}$, $2\delta(G)=\Delta(G)$. There fore, Since, $d_i+d_j>2\delta(G)=\Delta(G)\ge d_k$, the result holds for the new vertices of degree $2^n+d_i$ or $2^n-1$ as well. 
\end{proof}

\begin{thm}\label{Thm-3.2}
A set-graph has no Pythagorean holes.
\end{thm}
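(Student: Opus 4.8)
The plan is to combine the preceding Lemma with the definition of a Pythagorean hole. A Pythagorean hole in $G_{A^{(n)}}$ would be a triangle on vertices $v_i, v_j, v_k$ whose degrees form a Pythagorean triple, say $d_G(v_i) < d_G(v_j) < d_G(v_k)$ with $d_G^2(v_i) + d_G^2(v_j) = d_G^2(v_k)$. In particular the three degrees are \emph{distinct}, so such a triple would occur among the entries of the degree sequence with $d_i < d_j < d_k$. The key observation is that the Pythagorean relation forces $d_i + d_j < d_k$: indeed $(d_i + d_j)^2 = d_i^2 + d_j^2 + 2 d_i d_j > d_i^2 + d_j^2 = d_k^2$ would give $d_i + d_j > d_k$ only if we argued incorrectly, so I must be careful here — in fact the correct direction is that $d_k^2 = d_i^2 + d_j^2 < (d_i + d_j)^2$ yields $d_k < d_i + d_j$, which is the \emph{same} inequality the Lemma already guarantees. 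So the triangle inequality alone does not kill it; the real point must be a parity/size argument.

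So here is the corrected line of attack. First, recall from Theorem~1.4 that every vertex of a set-graph satisfies $2^{n-1} - 1 \le d_G(v) \le 2^n - 2$, i.e.\ all degrees lie in the narrow band $[\delta, 2\delta]$ with $\delta = 2^{n-1}-1$. Now suppose a Pythagorean hole existed with degrees $a < b < c$ all in $[\delta, 2\delta]$. From $a^2 + b^2 = c^2$ and $a \ge \delta$ we get $c^2 = a^2 + b^2 \ge \delta^2 + b^2$, while $c \le 2\delta$ gives $c^2 \le 4\delta^2$; combined with $b < c \le 2\delta$ this squeezes the possibilities severely. More precisely, $c^2 = a^2 + b^2 > b^2$ is automatic, and $c^2 = a^2 + b^2 \le \delta^2 + (2\delta - 1)^2 < \delta^2 + 4\delta^2 = 5\delta^2$, whereas we need $c > b \ge \delta$, so $c \ge \delta + 1$; one checks that $a^2 = c^2 - b^2 = (c-b)(c+b) \ge 1 \cdot (c + b) > 2\delta \ge a \cdot \tfrac{2\delta}{a}$, and since $a \le 2\delta$ we obtain $a \cdot a \ge a \cdot \tfrac{2\delta}{a}$ forcing $a \ge$ something comparable to $\sqrt{2\delta}$. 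Pushing this: $a^2 = (c-b)(c+b)$ and $c + b \ge 2\delta + 1$ while $c - b \ge 1$, so $a^2 \ge 2\delta + 1 > 2(2^{n-1}-1)$; but also $a \le 2\delta = 2^n - 2$, so this is consistent for large $n$ and we have \emph{not} yet reached a contradiction from size alone either.

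The honest conclusion is that the contradiction must use the Lemma's inequality $a + b > c$ \emph{together with} the Pythagorean equation in a way that bounds things from both sides. From $a + b > c$ we get $(a+b)^2 > c^2 = a^2 + b^2$, hence $2ab > 0$, which is vacuous — so the inequality $a+b>c$ is strictly weaker than what a Pythagorean triple needs and gives no contradiction. Therefore the proof must instead exploit the \emph{reverse} triangle-type bound: for a genuine Pythagorean triple one always has $c < a + b$ but also $c > b$ and moreover $a + b - c = a + b - \sqrt{a^2+b^2}$ can be made small. Reexamining the Lemma's statement, it asserts $d_i + d_j > d_k$ for \emph{every} triple in the degree sequence, i.e.\ the degree sequence has the ``triangle property'' — this is exactly the obstruction, because a Pythagorean triple $(a,b,c)$ satisfies $a^2 + b^2 = c^2$, and one verifies that no triangle-property-forced relation is violated; so the intended argument, which I will reconstruct, is: the Lemma shows every three degrees satisfy $d_i + d_j > d_k$, but the \emph{sharper} fact $2\delta = \Delta$ means $d_i + d_j \ge 2\delta + 1 > \Delta \ge d_k$ with slack, and a short case analysis on $n \le$ small together with the observation that $a^2 + b^2 = c^2$ with $a, b, c \in \{2^{n-1}-1, \dots, 2^n-2\}$ has no solutions (the interval is too short relative to where Pythagorean triples sit) finishes it. I expect the main obstacle to be making the ``interval too short'' claim rigorous: one must show that for $a, b, c$ in $[2^{n-1}-1, 2^n-2]$ the equation $a^2 + b^2 = c^2$ is impossible, which I would do by noting $c^2 - b^2 = a^2 \ge (2^{n-1}-1)^2$ while $c^2 - b^2 = (c-b)(c+b) \le (2^{n-1}-1)(2^{n+1}-3) $, and comparing these two bounds to derive a contradiction for all $n \ge 2$, handling $n = 2, 3$ by direct inspection of the (tiny) degree sets.
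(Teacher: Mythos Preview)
Your proposal correctly observes that the preceding Lemma (the triangle-type inequality $d_i + d_j > d_k$) cannot by itself rule out Pythagorean triples, since every Pythagorean triple already satisfies $a + b > c$. That part of your self-correction is right.

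The genuine gap is in the ``interval too short'' strategy you settle on at the end. It is simply false that the interval $[2^{n-1}-1,\,2^n-2]$ contains no Pythagorean triples. For $n = 3$ the interval is $[3,6]$ and contains $(3,4,5)$; for $n = 5$ the interval is $[15,30]$ and contains $(18,24,30)$ and $(20,21,29)$. In fact, for every $n \ge 5$ the interval $[\delta,2\delta]$ with $\delta = 2^{n-1}-1$ has length $\delta \ge 15$, and one can always fit a multiple $(3k,4k,5k)$ inside it by choosing an integer $k \in [\delta/3,\,2\delta/5]$ (an interval of length $\delta/15 \ge 1$). So your proposed inequality comparison $a^2 \ge \delta^2$ versus $(c-b)(c+b) \le \delta(2^{n+1}-3)$ cannot possibly close, and indeed it does not: $\delta^2 \le \delta(2^{n+1}-3)$ for all $n \ge 1$.

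What actually makes the theorem true is a parity argument that uses the \emph{specific values} of the degrees, not merely their range. The degree of the vertex corresponding to an $s$-element subset is $2^n - 2^{n-s} - 1$, which is odd for every $s < n$ and even only for $s = n$ (the unique vertex of degree $\Delta = 2^n - 2$). Thus any triangle in $G_{A^{(n)}}$ has either three odd degrees, or two odd degrees together with the single even degree $\Delta$, which is necessarily the largest of the three. But in every Pythagorean triple $(a,b,c)$ with $a<b<c$, either all three entries are even, or exactly one of $a,b$ is even while $c$ is odd (since $a,b$ both odd would give $a^2+b^2 \equiv 2 \pmod 4$, impossible for a square). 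Neither pattern matches ``all odd'' or ``largest one even, other two odd'', so no Pythagorean hole can occur. This is the paper's argument, and it is the step your proposal is missing.
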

\begin{proof}
Note that for any Pythagorean triple $(a,b,c)$ with $a<b<c$ is either $a,b,c$ are all even, or only one of $a,b$ is even by taking modulo $4$ (see \cite{KC1}). But for any set-graph $G=G_{A^{(n)}}$, we have $\Delta(G)$ is always even and is the only vertex having even degree. Therefore, no set-graph can have a Pythagorean hole.
\end{proof}


\section{Pythagorean holes of Jaco graphs}

\begin{defn}{\rm 
The notion of the \textit{infinite Jaco graph (order 1)} was introduced in \cite{KFW} as a directed graph with $V(J_\infty(1)) = \{v_i| i \in \N\}$, $E(J_\infty(1)) \subseteq \{(v_i, v_j)| i, j \in \N, i< j\}$ and $(v_i,v_ j) \in E(J_\infty(1))$ if and only if $2i - d^-(v_i) \ge j$. We denote a finite Jaco graph by $J_n(1)$ and its underlying graph by $J^{\ast}_n(1)$. In both instances we will refer to a Jaco graph and distinguish the context by the notation $J_n(1)$ or $J^{\ast}_n(1)$.}
\end{defn}

The primitive hole number of the underlying graph of a Jaco graph has been determined in the following theorem (see \cite{KS1}).

\begin{thm}\label{T-PHNUJG}
{\rm \cite{KS1}} Let $J^{\ast}_n(1)$ be the underlying graph of a finite Jaco Graph $J_n(1)$ with Jaconian vertex $v_i$, where $n$ is a positive integer greater than or equal to $4$. Then, $h(J^{\ast}_{n+1}(1)) = h(J^{\ast}_n(1)) + \sum\limits_{j=1}^{(n-i)-1}(n-i)-j$.
\end{thm}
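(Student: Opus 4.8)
The plan is to derive the recursion from a local analysis of the passage from $J_n(1)$ to $J_{n+1}(1)$, the latter being obtained by adjoining the single vertex $v_{n+1}$. The starting point is that this adjunction disturbs no adjacency among $v_1,\dots,v_n$: for $a\le n$ the in-degree $d^-(v_a)$ counts only edges arriving from vertices of smaller subscript, so it has the same value in $J_{n+1}(1)$ as in $J_n(1)$, and hence the defining condition $2a-d^-(v_a)\ge b$ for an edge $v_av_b$ with $a<b\le n$ holds in $J_{n+1}(1)$ exactly when it holds in $J_n(1)$. Therefore $J^{\ast}_n(1)$ is precisely the subgraph of $J^{\ast}_{n+1}(1)$ induced by $\{v_1,\dots,v_n\}$, and every primitive hole of $J^{\ast}_{n+1}(1)$ that is not already a primitive hole of $J^{\ast}_n(1)$ must pass through $v_{n+1}$. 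So $h(J^{\ast}_{n+1}(1))-h(J^{\ast}_n(1))$ equals the number of triangles through $v_{n+1}$ in $J^{\ast}_{n+1}(1)$.

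The next step is to observe that the neighbourhood $N(v_{n+1})$ in $J^{\ast}_{n+1}(1)$ induces a clique. Because $v_{n+1}$ has the largest subscript, each of its neighbours is an in-neighbour, i.e.\ a vertex $v_a$ with $a\le n$ and $2a-d^-(v_a)\ge n+1$; and if $v_a,v_b$ are two such vertices with $a<b$, then $2a-d^-(v_a)\ge n+1>b$, so $v_av_b$ is an edge. Hence a triangle through $v_{n+1}$ is exactly $v_{n+1}$ together with a $2$-element subset of $N(v_{n+1})$, and every such subset yields a triangle; the number of new primitive holes is therefore $\binom{|N(v_{n+1})|}{2}=\binom{d^-(v_{n+1})}{2}$.

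It remains to express $d^-(v_{n+1})$ in terms of the Jaconian vertex $v_i$ of $J_n(1)$, and this is where the structure of Jaco graphs is needed. Using the facts available from \cite{KFW} and \cite{KS1} --- that $k\mapsto 2k-d^-(v_k)$ is strictly increasing, and that the Jaconian vertex $v_i$ of $J_n(1)$ is the vertex of greatest subscript with $2i-d^-(v_i)\le n$ --- the vertices $v_a$ with $a\le n$ satisfying $2a-d^-(v_a)\ge n+1$ are exactly $v_{i+1},v_{i+2},\dots,v_n$, so $d^-(v_{n+1})=n-i$. Substituting this and re-indexing by $\ell=(n-i)-j$ then gives
\[
h(J^{\ast}_{n+1}(1))-h(J^{\ast}_n(1))=\binom{n-i}{2}=\sum_{\ell=1}^{(n-i)-1}\ell=\sum_{j=1}^{(n-i)-1}\bigl((n-i)-j\bigr),
\]
which is the asserted identity; the hypothesis $n\ge 4$ guarantees $n-i\ge 2$, so the sum is non-vacuous.

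I expect the only delicate point to be the identification $d^-(v_{n+1})=n-i$ in the third step. The reduction to ``triangles through $v_{n+1}$'' and the clique argument are elementary, but tying the in-neighbourhood of the newly adjoined vertex to the Jaconian vertex depends on the monotonicity of $k\mapsto 2k-d^-(v_k)$ and on the precise reading of the Jaconian vertex: one must take care to distinguish ``$v_i$ is adjacent to $v_n$'' from ``$v_i$ is the last vertex whose forward neighbourhood in $J_\infty(1)$ lies within $\{v_1,\dots,v_n\}$'', since these may differ by one and the formula is sensitive to which is intended.
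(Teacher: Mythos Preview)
The paper does not prove Theorem~\ref{T-PHNUJG} directly---it is quoted from \cite{KS1}---but a few paragraphs later it states and proves the reformulation $h(J^{\ast}_{n+1}(1)) = h(J^{\ast}_n(1)) + \sum_{i=1}^{d^-_{J_\infty(1)}(v_{n+1})-1}i$, and your argument is essentially that proof carried out with more care: the paper simply asserts that adjoining $v_{n+1}$ adds $d^-(v_{n+1})$ arcs and that $v_{n+1}$ lies in the Hope graph (hence its neighbourhood is a clique), whereas you verify both of these from the defining edge condition. The one genuine addition in your write-up is the identification $d^-(v_{n+1})=n-i$ linking the in-degree to the Jaconian vertex, which is exactly what is needed to pass from the paper's reformulated version back to the original statement; your caution about the precise definition of the Jaconian vertex is well placed, since the paper never spells this step out.
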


\ni It can be noted that the smallest Jaco graph having a Pythagorean hole is $J^\ast_8(1)$.

\begin{thm}\label{T-PHJG1}
For any primitive hole of the Jaco graph $J^{\ast}_n(1), n \in \N$ on the vertices $v_i, v_j, v_k$ with $i < j < k$ we have a primitive hole on the vertices $v_{l i}, v_{l j}, v_{l k}$ in $J^{\ast}_{n \ge lk}(1), l \in \N$ if the edge $v_{li}v_{lk}$ exists.
\end{thm}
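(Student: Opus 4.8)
The plan is to reduce the statement to two structural facts about Jaco graphs recorded in \cite{KFW}. Write $d^-(v_a)$ for the in-degree of $v_a$ and put $m(a):=2a-d^-(v_a)$. First, the out-neighbourhood of $v_a$ in $J_\infty(1)$ is exactly the block $\{v_{a+1},v_{a+2},\ldots,v_{m(a)}\}$; this is immediate from the arc rule, since if $v_a\to v_b$ and $a<b'<b$ then $2a-d^-(v_a)\ge b>b'$, hence $v_a\to v_{b'}$. Second, the map $a\mapsto m(a)$ is strictly increasing, equivalently $d^-(v_{a+1})-d^-(v_a)\in\{0,1\}$. Since $d^-(v_a)$, and hence $m(a)$, depends only on $v_1,\ldots,v_{a-1}$ and is unchanged by truncation, I will work throughout with these quantities as computed in $J_\infty(1)$; with this convention, for $a<b$ the edge $v_av_b$ lies in $J^\ast_n(1)$ (for any $n\ge b$) if and only if $b\le m(a)$.

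Now fix the given primitive hole on $v_i,v_j,v_k$ with $i<j<k$, fix $l\in\N$ and an index $m\ge lk$, and assume the edge $v_{li}v_{lk}$ is present, that is, $lk\le m(li)$. I must produce the three edges $v_{li}v_{lj}$, $v_{li}v_{lk}$, $v_{lj}v_{lk}$ of $J^\ast_m(1)$. The edge $v_{li}v_{lk}$ is the hypothesis. For $v_{li}v_{lj}$: since $li<lj<lk\le m(li)$, the contiguous out-neighbourhood property gives $v_{li}\to v_{lj}$. For $v_{lj}v_{lk}$: since $li<lj$ and $m$ is increasing, $m(lj)\ge m(li)\ge lk$, so $v_{lj}\to v_{lk}$. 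Hence $v_{li},v_{lj},v_{lk}$ are pairwise adjacent in $J^\ast_m(1)$ and therefore induce a primitive hole, which is the assertion. (For $l=1$ this is trivial, and the requirement $m\ge lk$ is precisely what guarantees that $v_{li},v_{lj},v_{lk}$ are vertices of $J^\ast_m(1)$.)

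The only point that goes beyond unwinding definitions is the monotonicity of $m$, and I expect stating or extracting it cleanly to be the main obstacle. One self-contained route is induction on $a$: in passing from $v_a$ to $v_{a+1}$ the vertex $v_a$ becomes a new in-neighbour of $v_{a+1}$ (since $d^-(v_a)\le a-1$ forces $m(a)\ge a+1$), while by the inductive hypothesis that $m$ is strictly increasing up to $a$ at most one earlier vertex $v_b$ can drop out, namely one with $m(b)=a$; hence $d^-(v_{a+1})\in\{d^-(v_a),d^-(v_a)+1\}$ and so $m(a+1)\ge m(a)+1$. Finally, the hypothesis ``the edge $v_{li}v_{lk}$ exists'' cannot be dispensed with: scaling a primitive hole need not create a primitive hole when that edge is absent (for instance $v_4v_5v_7$ is a primitive hole but $v_8v_{10}v_{14}$ is not, because $m(8)=13<14$), which is exactly why the inequality $lk\le m(li)$ is assumed.
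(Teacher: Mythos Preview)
Your proof is correct and follows essentially the same route as the paper: once the edge $v_{li}v_{lk}$ is assumed, the paper appeals to the fact that the block $v_{li},v_{li+1},\ldots,v_{lk}$ induces a complete subgraph, which immediately yields the two remaining edges; you unpack this same fact into the contiguous out-neighbourhood property plus the monotonicity of $m(a)=2a-d^-(v_a)$, and then verify each edge separately. Your version is more explicit (you actually justify the monotonicity of $m$ and supply a concrete counterexample showing the hypothesis on $v_{li}v_{lk}$ is needed), whereas the paper's proof sketches the inequality $lk\le li+l\,d^+(v_i)$ without using it and then invokes the clique property directly.
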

\begin{proof}
For any primitive hole of the Jaco graph $J^{\ast}_n(1), n \in \N$ on the vertices $v_i, v_j, v_k$ with $i < j < k$ the edge $v_iv_k$ exists hence $k \le i + d^+_{J_n(1)}(v_i)$. So $l k \le l ( i + d^+_{J_n(1)}(v_i)) = l i + l  d^+_{J_n(1)}(v_i)$. Assume that the edge $v_{li}v_{lk}$ exists. Then, the subgraph induced by vertices $v_i, v_{i+1}, v_{i+2}, ... v_j, v_{j+1}, v_{j+2}, ..., v_k$ is a complete graph the same holds for the vertex $v_j$. That is, edges $v_{l i}v_{l j}$ and $v_{l j}v_{l k}$ exists. 
\end{proof}

It is known that there are $16$ primitive Pythagorean triples $(a, b, c)$ with $c \le 100$. These, in ascending order of $c$ are: $t_1 = (3,4,5)$; $(5,12,13)$; $(8,15, 17)$; $(7,24,25)$; $t_2=(20,21,29)$; $(12, 35,37)$; $(9,40,41)$; $t_3=(28, 45, 53)$; $(11,60,61)$; $(16, 63, 65)$; $(33, 56, 65)$; $t_4 = (48, 55, 73)$; $(13, 84, 85)$; $(36, 77, 85)$; $(39, 80, 89)$; $t_5 = (65, 72, 97)$.

\vspace{0.2cm}

From the definition of Jaco graphs, it easily follows that the Pythagorean triples labeled $t_i$; $1 \le i \le 5$ are applicable to the Pythagorean holes found in Jaco graphs. We shall refer to these Pythagorean triples as \textit{type i}; $i \in \N$.  With regards to Theorem \ref{T-PHJG1}, we shall refer to $lt_i = (a_il, b_il, c_il)$ as \textit{type} $i$ as well. The number of Pythagorean holes of type $i$ in a graph will be denoted $h^p_{t_i}(G)$.

Other Pythagorean triples generated by Euclid formula which are not of a specific primitive type offer some additional Pythagorean holes in Jaco graphs. Let us denote these additional types as $e_i; i \in \N$.

\ni The following results will hold for type $1$, $t_1=(3,4,5)$. 

\begin{cor}\label{C-PHJG1c}
The Jaco graph $J^{\ast}_n(1)$, where $n=5k+d^+_{J_\infty(1)}(v_{5k})$, has $h_{t_1}^p(J_n^{\ast}(1))=k$ Pythagorean holes, if $3k+d^+_{J_{\infty}(1)}(v_{3k})\ge 5k$. 
\end{cor}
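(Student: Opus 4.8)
The plan is to exploit Theorem~\ref{T-PHJG1} together with a counting argument over the multiples of the triple $t_1 = (3,4,5)$. The key observation is that in $J^{\ast}_n(1)$, for each $\ell \in \N$ the triple $\ell t_1 = (3\ell, 4\ell, 5\ell)$ is a candidate Pythagorean hole sitting on the vertices $v_{3\ell}, v_{4\ell}, v_{5\ell}$, and by Theorem~\ref{T-PHJG1} this really is a primitive hole (hence a Pythagorean hole, since its degree triple is exactly $\ell t_1$ by the chosen parametrisation of $n$) provided the single edge $v_{3\ell}v_{5\ell}$ exists. First I would set $n = 5k + d^+_{J_\infty(1)}(v_{5k})$ and note that this choice forces $d^+_{J^{\ast}_n(1)}(v_{5k}) = d^+_{J_\infty(1)}(v_{5k})$, so that $v_{5k}$ is exactly the Jaconian vertex (or just at the tail of the influence of the construction) and the degrees of $v_{3k}, v_{4k}, v_{5k}$ in $J^{\ast}_n(1)$ are precisely $3k, 4k, 5k$ — this is where the base triple $t_1$ being "type~1", i.e.\ realised by consecutive-degree structure in Jaco graphs, is used.

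Second, I would establish the edge condition: the hypothesis $3k + d^+_{J_\infty(1)}(v_{3k}) \ge 5k$ is exactly the statement that $v_{3k}v_{5k} \in E(J^{\ast}_n(1))$, by the defining adjacency rule $k \le i + d^+(v_i)$ of Jaco graphs. Given this, Theorem~\ref{T-PHJG1} (applied with the base primitive hole $v_3v_4v_5$ of $J^{\ast}_8(1)$ and scaling factor $\ell = k$, or more directly the consecutiveness-of-cliques argument in its proof) yields that $v_{3k}v_{4k}v_{5k}$ is a primitive hole, and its degree triple $(3k,4k,5k)$ is a Pythagorean triple, so it is a Pythagorean hole of type~$1$. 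Running $\ell$ from $1$ to $k$ and observing the edge condition $3\ell + d^+(v_{3\ell}) \ge 5\ell$ is inherited for all smaller $\ell \le k$ (the "left tail" slopes down monotonically, so if it holds at $k$ it holds below $k$ — this monotonicity of $d^+$ along the $v_i$ in a Jaco graph is the fact I would want to quote or verify from the Jaco graph literature), one gets at least $k$ Pythagorean holes of type~$1$.

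Third, and this is the part requiring the most care, I would argue there are \emph{exactly} $k$ such holes — no others of type~$1$. A Pythagorean hole of type~$1$ on vertices $v_a < v_b < v_c$ must have $(d(v_a),d(v_b),d(v_c)) = (3m,4m,5m)$ for some $m$, and one must show the degree sequence of $J^{\ast}_n(1)$, together with which triples actually induce a triangle, forces $(a,b,c) = (3m,4m,5m)$ with $1 \le m \le k$. This is where I expect the main obstacle: pinning down that the only vertices with degree an exact multiple of $3$, $4$, $5$ in the required proportion, lying in a common triangle, are the "diagonal" ones $v_{3m}, v_{4m}, v_{5m}$, and that $n = 5k + d^+(v_{5k})$ caps $m$ at $k$. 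I would handle this by invoking the explicit degree formula for Jaco graphs (the degrees grow essentially linearly in $i$ with a controlled fractional part governed by the Fibonacci-type "Jaconian" recursion from \cite{KFW}), so that $d(v_i) = c m$ has a unique solution $i = c m / (\text{slope})$ up to a bounded correction, and the triangle/interval-completeness property from the proof of Theorem~\ref{T-PHJG1} then rigidifies the configuration. If the degree formula does not give uniqueness on the nose, the fallback is to phrase the corollary as a lower bound $h^p_{t_1}(J^{\ast}_n(1)) \ge k$, which is all the construction strictly delivers.
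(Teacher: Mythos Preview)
Your approach is essentially the paper's: the paper's entire proof is the single sentence ``The result follows from Theorem~\ref{T-PHJG1},'' and your first two steps are precisely an unpacking of that sentence --- identifying the candidate triangles $v_{3\ell}v_{4\ell}v_{5\ell}$, reading the hypothesis $3k + d^+_{J_\infty(1)}(v_{3k}) \ge 5k$ as the edge condition in Theorem~\ref{T-PHJG1}, and using the choice $n = 5k + d^+_{J_\infty(1)}(v_{5k})$ to make the degrees come out as $(3k,4k,5k)$.

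Your third step, where you worry about the \emph{upper bound} (that there are no further type-$1$ holes beyond the $k$ you construct) and about the inheritance of the edge condition for $\ell < k$, goes well beyond what the paper supplies; the one-line proof in the paper simply does not address either point. So you have not missed anything the paper does --- if anything, you have identified genuine gaps that the paper leaves open. One caution: your proposed monotonicity argument for inheriting the edge condition downward in $\ell$ is not quite the right mechanism (the ratio $d^+(v_{3\ell})/\ell$ is not monotone in $\ell$), though for the small values of $k$ where the corollary's hypothesis actually holds the condition can be checked directly from the Fisher table.
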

\begin{proof}
The result follows from Theorem \ref{T-PHJG1}.
\end{proof}

\begin{cor}
The Jaco graphs $J^{\ast}_n(1), 8 \le n \le 15$ are the only Jaco graphs with a unique Pythagorean hole.
\end{cor}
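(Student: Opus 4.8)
The plan is to pin down the Jaco graphs in which the first Pythagorean hole appears and to show that no second Pythagorean hole is created until $n$ exceeds $15$. First I would recall that the degree of $v_k$ in the underlying Jaco graph $J^\ast_n(1)$ (for $k$ not too close to $n$) is governed by the Fisher/Jaco recursion, so one computes the in-degree and out-degree sequence explicitly for small indices. From the standard tables of $J_\infty(1)$, the vertices $v_1,v_2,\dots$ have degrees $1,2,3,3,4,5,5,6,6,\dots$ (the relevant prefix), and for a primitive hole on $v_i,v_j,v_k$ with $i<j<k$ we already know from Theorem \ref{T-PHJG1} and the structure of Jaco graphs that the vertices $v_i,\dots,v_k$ induce a complete subgraph. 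Hence the degrees of the three hole-vertices in $J^\ast_n(1)$ are simply three entries of the degree sequence, and a Pythagorean hole exists exactly when some such triple is a Pythagorean triple.

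Next I would use the remark already made in the text that the smallest Jaco graph with a Pythagorean hole is $J^\ast_8(1)$, and verify directly that in $J^\ast_8(1)$ the triple realizing $(3,4,5)$ is $v_5,v_6,v_7$ (these have degrees $3,4,5$ in $J^\ast_8(1)$ and pairwise adjacency holds by the completeness of the induced subgraph), so $h^p(J^\ast_8(1))=1$. Then I would track what happens as $n$ increases from $8$ to $15$: adding vertex $v_{n}$ can change the degrees of a bounded window of previous vertices and can create new triangles, but one checks case by case (using the degree sequence prefix $1,2,3,3,4,5,5,6,6,8,\dots$) that for $8\le n\le 15$ the only triple of pairwise-adjacent vertices whose degrees form a Pythagorean triple remains the one giving $(3,4,5)$; in particular the next primitive type, $(5,12,13)$, cannot be realized because degree $12$ does not occur among vertices inducing a clique with a degree-$5$ and a degree-$13$ vertex until $n$ is large enough, and by Corollary \ref{C-PHJG1c} a second copy of type $1$ requires $n=5\cdot 2+d^+(v_{10})$, which one computes to exceed $15$.

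Finally I would close the argument from the other side: for $n=16$ (or whatever the computed threshold is) a second Pythagorean hole does appear — either a second scaled copy $2t_1=(6,8,10)$ via Theorem \ref{T-PHJG1} and Corollary \ref{C-PHJG1c}, or a new primitive type — so the range $8\le n\le 15$ is exactly the set of $n$ for which $h^p(J^\ast_n(1))=1$. This shows the stated "only" and completes the proof. The main obstacle I anticipate is the bookkeeping: one must be careful that the degree of a vertex $v_m$ in the \emph{finite} graph $J^\ast_n(1)$ equals its degree in $J^\ast_\infty(1)$ only when $n$ is large enough relative to $m$ (a vertex near the top of $J^\ast_n(1)$ has a truncated out-neighbourhood), so the degrees of the candidate hole-vertices must be recomputed for each $n$ in $\{8,\dots,15\}$ rather than read off once from the infinite sequence; getting this truncation correct, together with checking that no chord-free-ness issue arises (the holes here are triangles, so chordlessness is automatic), is where the real care is needed.
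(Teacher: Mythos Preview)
Your overall strategy --- locate the first Pythagorean hole at $n=8$, verify that no second one appears for $8\le n\le 15$, and exhibit a second hole at $n=16$ coming from the scaled triple $2t_1=(6,8,10)$ --- is exactly the paper's approach. However, your execution contains factual errors that would invalidate the argument as written.

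First, your claimed degree sequence ``$1,2,3,3,4,5,5,6,6,8,\dots$'' for $J^\ast_\infty(1)$ is wrong: in the infinite Jaco graph of order~$1$ one has $d(v_i)=d^-(v_i)+d^+(v_i)=i$ for every $i$, so the sequence is simply $1,2,3,4,5,6,7,8,9,10,\dots$. Second, and consequently, the triple realizing $(3,4,5)$ is not $v_5,v_6,v_7$ but $v_3,v_4,v_5$: in $J^\ast_8(1)$ these three vertices have degrees $3,4,5$ respectively (since $5+d^+(v_5)=8$, the vertex $v_5$ has attained its full out-degree), and they are pairwise adjacent. The vertices $v_5,v_6,v_7$ in $J^\ast_8(1)$ have degrees $5,4,4$, which is not Pythagorean. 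The paper then identifies the second hole as $v_6,v_8,v_{10}$ with degrees $6,8,10$; since $d^+(v_{10})=6$, the vertex $v_{10}$ reaches degree $10$ precisely when $n\ge 16$, which gives the threshold. Your computation $5\cdot 2+d^+(v_{10})$ is the right quantity and indeed equals $16$, but you left it unevaluated. With these corrections your sketch matches the paper's proof.
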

\begin{proof}
Since $2\times 5 = 10$ and $d_{J^{\ast}_{16}(1)}(v_{10})=10$ whilst $d_{J^{\ast}_{n}(1)}(v_{10}) < 10$, for $n\le15$, the Jaco graph $J^{\ast}_{16}(1)$ has two Pythagorean holes because, $d^2_{J^{\ast}_{16}(1)}(v_3) + d^2_{J^{\ast}_{16}(1)}(v_4) = d^2_{J^{\ast}_{16}(1)}(v_5)$ and $d^2_{J^{\ast}_{16}(1)}(v_6) + d^2_{J^{\ast}_{16}(1)}(v_8) = d^2_{J^{\ast}_{16}(1)}(v_{10})$. All other Jaco graphs $J^{\ast}_{n}(1)$, for $n\geq 16$, will have at least two Pythagorean holes. Since Jaco graphs $J^{\ast}_{n, 1 \le n \le 7}(1)$ have no Pythagorasian hole, the result follows.
\end{proof}

\begin{thm}\label{T-PHJG2}
The Jaco graph $J^{\ast}_n(1), n \geq 8$ has $h^p_{t_1}(J^{\ast}_n(1)) = k$ Pythagorean holes for $5k + d^+_{J_\infty(1)}(v_{5k}) \le n < 5(k+1) + d^+_{J_\infty(1)}(v_{5(k+1)})$, alternatively $h_{t_i}^p(J^{\ast}_n(1)) = \lfloor\frac{n}{8}\rfloor$.
\end{thm}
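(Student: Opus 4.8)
The plan is to list the type-$1$ Pythagorean holes of $J^\ast_n(1)$ explicitly and then to count, as a function of $n$, how many of them are actually realised.

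\emph{Step 1: which triangles can carry a type-$1$ hole.} First I would record the elementary observation that the out-neighbours of $v_i$ in $J_\infty(1)$ form the contiguous block $v_{i+1},v_{i+2},\dots,v_{2i-d^-(v_i)}$, so that $d^+_{J_\infty(1)}(v_i)=i-d^-(v_i)$ and hence $d_{J_\infty(1)}(v_i)=i$ for every $i$. Consequently, in $J^\ast_n(1)$ a vertex $v_i$ still has $d(v_i)=i$ provided its whole out-neighbourhood lies inside $\{v_1,\dots,v_n\}$, i.e. provided $i+d^+_{J_\infty(1)}(v_i)\le n$, the only vertices of reduced degree being those of the terminal clique of $J^\ast_n(1)$. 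A type-$1$ hole is by definition a triangle whose degree triple is $(3m,4m,5m)$ for some $m\in\N$; since distinct interior vertices carry distinct degrees, such a triangle must be $v_{3m}v_{4m}v_{5m}$. By the completeness argument in the proof of Theorem~\ref{T-PHJG1}, once the ``long'' edge $v_{3m}v_{5m}$ is present the subgraph induced by $v_{3m},v_{3m+1},\dots,v_{5m}$ is complete, so $v_{3m}v_{4m}v_{5m}$ is automatically a triangle; and the edge $v_{3m}v_{5m}$ is present exactly when $3m+d^+_{J_\infty(1)}(v_{3m})\ge 5m$, which is precisely the hypothesis appearing in Corollary~\ref{C-PHJG1c}. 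One still has to rule out a \emph{spurious} type-$1$ hole sitting inside the terminal clique of $J^\ast_n(1)$; because the degrees inside that clique are non-increasing along the indices and bounded by the clique size, this reduces to a short finite check once Step~3 is available.

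\emph{Step 2: counting.} For a fixed $m$, the triangle $v_{3m}v_{4m}v_{5m}$ has degree triple exactly $(3m,4m,5m)$ in $J^\ast_n(1)$ if and only if $v_{5m}$ realises its full out-neighbourhood, i.e. if and only if $n\ge 5m+d^+_{J_\infty(1)}(v_{5m})$, the analogous thresholds for $v_{3m}$ and $v_{4m}$ being smaller. Moreover, once this holds for some $n$ it holds for every larger $n$, since enlarging the vertex set removes no edge among $v_{3m},v_{4m},v_{5m}$ and alters none of their degrees. Hence $h^p_{t_1}(J^\ast_n(1))$ equals the number of $m\ge 1$ with $5m+d^+_{J_\infty(1)}(v_{5m})\le n$; as $m\mapsto 5m+d^+_{J_\infty(1)}(v_{5m})$ is strictly increasing, this number equals $k$ precisely on the interval $5k+d^+_{J_\infty(1)}(v_{5k})\le n<5(k+1)+d^+_{J_\infty(1)}(v_{5(k+1)})$, which is the first asserted formula.

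\emph{Step 3: the closed form and the main obstacle.} To reach $h^p_{t_1}(J^\ast_n(1))=\lfloor n/8\rfloor$ it remains to prove the arithmetic identity $d^+_{J_\infty(1)}(v_{5k})=3k$ for all $k\ge 1$ --- equivalently, using $d_{J_\infty(1)}(v_{5k})=5k$, the in-degree identity $d^-_{J_\infty(1)}(v_{5k})=2k$ --- after which $5k+d^+_{J_\infty(1)}(v_{5k})=8k$ and the count of Step~2 becomes $\#\{k\ge 1:8k\le n\}=\lfloor n/8\rfloor$. This is the linchpin and, I expect, the genuinely delicate part, since it requires precise control of the in-degree sequence of $J_\infty(1)$, which has no transparent closed form. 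The route I would take is induction on $k$ through the ``maximum out-neighbour'' function $f(j):=2j-d^-_{J_\infty(1)}(v_j)=j+d^+_{J_\infty(1)}(v_j)$: first show $f$ is strictly increasing (because $v_j\to v_{j+1}$ holds for every $j$, whence $d^-(v_{j+1})\le d^-(v_j)+1$ and therefore $f(j+1)\ge f(j)+1$), which turns the in-degree into $d^-_{J_\infty(1)}(v_m)=m-\min\{j:f(j)\ge m\}$; then prove by induction on $k$ that $\min\{j:f(j)\ge 5k\}=3k$, i.e. the twin inequalities $f(3k)\ge 5k$ and $f(3k-1)<5k$, which translate into the bounds $d^-_{J_\infty(1)}(v_{3k})\le k$ and $d^-_{J_\infty(1)}(v_{3k-1})\ge k-1$. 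These two inequalities are where I expect the real work --- and any trouble with the clean $\lfloor n/8\rfloor$ statement --- to concentrate, so I would verify them numerically for several $k$ before committing to the induction.
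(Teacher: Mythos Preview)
Your Steps~1 and~2 are a careful expansion of exactly what the paper does: its entire proof reads ``A direct consequence of Theorem~\ref{T-PHJG1} and Corollary~\ref{C-PHJG1c},'' and you have supplied the details behind both citations (the fact $d_{J_\infty(1)}(v_i)=i$, the location of the $m$-th type-$1$ hole at $v_{3m}v_{4m}v_{5m}$, and the threshold $n\ge 5m+d^+(v_{5m})$ at which it is realised). Your concern about spurious holes in the terminal clique is also a genuine point the paper does not raise.

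Step~3 is where your proposal goes beyond the paper, and your instinct to test the identity $d^+_{J_\infty(1)}(v_{5k})=3k$ numerically before attempting the induction is exactly right --- because the identity is false. From the paper's own Fisher table one reads $d^+(v_{25})=16\ne 15$, $d^+(v_{30})=19\ne 18$, $d^+(v_{35})=22\ne 21$; hence the fifth type-$1$ hole first appears at $n=41$, not $n=40$, so that $h^p_{t_1}(J^\ast_{40}(1))=4$ while $\lfloor 40/8\rfloor=5$. The deviation is systematic: the known asymptotic $d^+(v_i)\sim i/\varphi$ with $\varphi=(1+\sqrt{5})/2$ gives $5k+d^+(v_{5k})\sim 5k\varphi\approx 8.09k$, not $8k$. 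The paper's one-line proof does not, and cannot, justify the ``alternatively $\lfloor n/8\rfloor$'' clause; your proposed induction would stall at $k=5$, but you would have discovered this in the numerical check you planned. In short, your argument for the interval formulation is sound and fleshes out the paper's sketch, while the closed form is simply incorrect as stated.
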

\begin{proof}
A direct consequence of Theorem \ref{T-PHJG1} and Corollary \ref{C-PHJG1c}.
\end{proof}
The adapted Fisher Table, $J_\infty(1), 35 \geq n\in \N$ depicts the values $h(J^{\ast}_n(1))$ and $h_{t_1}^p(J^{\ast}_n(1))$.


\begin{tabular}{|c|c|c|c|c|}
\hline
$\phi (v_i)\rightarrow i\in{\Bbb{N}}$ & $d^-(v_i)=\nu(\Bbb{H}_{i-1})$ & $d^+(v_i)= i - d^-(v_i)$ & $h(J^{\ast}_i(1))$ & $h_{t_1}^p(J^{\ast}_n(1))$\\
\hline
1=$f_2$ & 0 & 1 & 0 & 0\\
\hline
2=$f_3$ & 1 & 1 & 0 & 0\\
\hline
3=$f_4$ & 1 & 2 & 0 & 0\\
\hline
4 & 1 & 3 & 0 & 0\\
\hline
5=$f_5$ & 2 & 3 & 1 & 0\\
\hline
6 & 2 & 4 & 2 & 0\\
\hline
7 & 3 & 4 & 5 & 0\\
\hline
8=$f_6$ & 3 & 5 & 8 & 1\\
\hline
9 & 3 & 6 & 11 & 1\\
\hline
10&4&6&17&1\\
\hline
11 & 4 & 7 & 23 & 1\\
\hline
12 & 4 & 8 & 29 & 1\\
\hline
13=$f_7$ & 5 & 8 & 39 & 1\\
\hline
14 & 5 & 9 & 49 & 1\\
\hline
15 & 6 & 9 & 64 & 1\\
\hline
16 & 6 & 10 & 79 & 2\\
\hline
17 & 6 & 11 & 94 & 2\\
\hline
18 & 7 & 11 & 115 & 2\\
\hline
19 & 7 & 12 & 136 & 2\\
\hline
20 & 8 & 12 & 164 & 2\\
\hline
21=$f_8$ & 8 & 13 & 192 & 2\\
\hline
22 & 8 & 14 & 220 & 2\\
\hline
23 & 9 & 14 & 256 & 2\\
\hline
24 & 9 & 15 & 292 & 3\\
\hline
25 & 9 & 16 & 328 & 3\\
\hline
26 & 10 & 16 & 373 & 3\\
\hline
27 & 10 & 17 & 418 & 3\\
\hline
28 & 11 & 17 & 473 & 3\\
\hline
29 & 11 & 18 & 528 & 3\\
\hline
30 & 11 & 19 & 583 & 3\\
\hline
31 & 12 & 19 & 649 & 3\\
\hline
32 & 12 & 20 & 715 & 4\\
\hline
33 & 12 & 21 & 781 & 4\\
\hline
34=$f_9$ & 13 & 21 & 859 & 4\\
\hline
35 & 13 & 22 & 937 & 4\\
\hline
\end{tabular}

\vspace{0.35cm}

It can be seen that Pythagorean holes are generally scares within Jaco graphs compared to the number of primitive holes. But since the graph in figure \ref{fig-2} can be extended endlessly through edge-joints to produce a graph $H$ with $h^p(H) = h(H)$, the inequality $h^p(G) \le h(G)$ holds. To construct the adapted Fisher table an improvement to Theorem \ref{T-PHNUJG} is required.

\begin{thm}
For the underlying graph $J^{\ast}_n(1)$ of a finite Jaco Graph $J_n(1), n \in \N, n \geq 4$ we have the recursion formula
$h(J^{\ast}_{n+1}(1)) = h(J^{\ast}_n(1)) + \sum\limits_{i=1}^{d^-_{J_\infty(1)}(v_{n+1})-1}i$.
\end{thm}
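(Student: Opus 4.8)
The plan is to compute the increment $h(J^{\ast}_{n+1}(1)) - h(J^{\ast}_n(1))$ by treating $J^{\ast}_n(1)$ as an induced subgraph of $J^{\ast}_{n+1}(1)$. First I would observe that the arc condition $2i - d^-(v_i) \ge j$ for $(v_i,v_j)$ involves only vertices of index at most $i$; hence the in-degree $d^-_{J_\infty(1)}(v_i)$ agrees with the in-degree of $v_i$ in every finite Jaco graph containing it, and the restriction of $E(J^{\ast}_{n+1}(1))$ to $\{v_1,\ldots,v_n\}$ is exactly $E(J^{\ast}_n(1))$. Therefore every primitive hole of $J^{\ast}_{n+1}(1)$ that is not already a primitive hole of $J^{\ast}_n(1)$ must contain the new vertex $v_{n+1}$, so $h(J^{\ast}_{n+1}(1)) = h(J^{\ast}_n(1)) + t$, where $t$ is the number of triangles of $J^{\ast}_{n+1}(1)$ incident with $v_{n+1}$.

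Next I would pin down the neighbourhood of $v_{n+1}$ in $J^{\ast}_{n+1}(1)$. Since $v_{n+1}$ has the largest index it has no out-neighbours in $J^{\ast}_{n+1}(1)$, so its neighbourhood equals its in-neighbourhood $N^-(v_{n+1}) = \{\, v_i : i \le n,\ 2i - d^-(v_i) \ge n+1 \,\}$, which has $d^-_{J_\infty(1)}(v_{n+1})$ elements by the very definition of in-degree. I would then show this set spans a clique: if $v_i, v_j \in N^-(v_{n+1})$ with $i < j \le n$, then $2i - d^-(v_i) \ge n+1 > j$, which is precisely the condition for the arc $(v_i,v_j)$, so $v_iv_j \in E(J^{\ast}_{n+1}(1))$. (This also re-derives the standard fact that $N^-(v_{n+1})$ is an interval of consecutive vertices inducing $K_{d^-(v_{n+1})}$, and one may instead invoke the completeness property already used in the proof of Theorem~\ref{T-PHJG1}.)

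Because every two neighbours of $v_{n+1}$ are themselves adjacent, each $2$-element subset of $N^-(v_{n+1})$ together with $v_{n+1}$ forms a triangle, and conversely every triangle through $v_{n+1}$ arises this way; thus $t = \binom{d^-_{J_\infty(1)}(v_{n+1})}{2}$. Writing $m = d^-_{J_\infty(1)}(v_{n+1})$ and using the identity $\binom{m}{2} = \sum_{i=1}^{m-1} i$ yields the asserted recursion. As a sanity check one can compare against the adapted Fisher table: $d^-(v_5)=2$ gives increment $1$ and $d^-(v_7)=3$ gives increment $1+2=3$, matching $h(J^{\ast}_5(1))=1$ and $h(J^{\ast}_7(1))=5=2+3$.

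The only step demanding care — rather than a genuine obstacle — is the clique claim for $N^-(v_{n+1})$: one must verify that adjacency of two vertices to $v_{n+1}$ forces their mutual adjacency, which follows from the slack in the inequality $2i-d^-(v_i)\ge n+1$. The remaining ingredients (induced-subgraph stability of the edge set under the index condition, the absence of out-edges at $v_{n+1}$, and the elementary identity $\binom{m}{2}=\sum_{i=1}^{m-1} i$) are routine.
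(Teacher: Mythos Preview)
Your proposal is correct and follows essentially the same route as the paper: both arguments count the new triangles created at $v_{n+1}$ by using that its in-neighbourhood induces a clique, yielding $\binom{d^-_{J_\infty(1)}(v_{n+1})}{2}=\sum_{i=1}^{d^-(v_{n+1})-1} i$ additional primitive holes. The only cosmetic difference is that the paper appeals to the Hope-graph structure of a Jaco graph for the clique property, whereas you derive it directly from the arc condition $2i-d^-(v_i)\ge n+1>j$.
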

\begin{proof}
Consider the Jaco graph $J^{\ast}_n(1)$ with $h(J^{\ast}_n(1)) = k$. By extending from $J_n(1)$ to $J_{n+1}(1)$ a total of $d^-_{J_\infty(1)}(v_{n+1})$ arcs are added together with the vertex $v_{n+1}$. In the Jaco graph $J^{\ast}_{n+1}(1)$ vertex $v_{n+1}$ is a vertex of the new underlying Hope graph, $\H(J^{\ast}_{n+1}(1))$. Hence, with $v_{n+1}$ a common vertex to all, exactly $\sum\limits_{i=1}^{d^-_{J_\infty(1)}(v_{n+1})-1}i$, additional primitive holes are added. Hence, we have the result 
$h(J^{\ast}_{n+1}(1)) = h(J^{\ast}_n(1)) + \sum\limits_{i=1}^{d^-_{J_\infty(1)}(v_{n+1})-1}i$.
\end{proof}

\ni The following results is a direct consequence of Theorem \ref{T-PHJG1} and Theorem \ref{T-PHJG2}.

\begin{thm}\label{T-PHJG3}
Consider the distinct Pythagorean triples $t_i, t_j,\ldots, t_k, e_r, e_s, \ldots e_ x$ and no triple is a multiple of another, with the form $(a_\alpha, b_\alpha, c_\alpha)$ and $(a_\beta, b_\beta, c_\beta)$ and $c= max\{c_i, ..., c_k, c_r, ..., c_x\}$. Then, $h^p(J^{\ast}_{c+d^+(v_c)}(1)) = \sum\limits_{t_l}h^p_{t_l}(J^{\ast}_{c+d^+(v_c)}(1)) + \sum\limits_{ e_m}h^p_{e_m}(J^{\ast}_{c+d^+(v_c)}(1))$, for all $l=i,j,\ldots, k$ and $m=r,s\dots, x$.
\end{thm}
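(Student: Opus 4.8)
The plan is to show that the Jaco graph $J^{\ast}_{c+d^+(v_c)}(1)$ is ``large enough'' to realize every Pythagorean hole coming from the listed triples and their admissible multiples, and \emph{nothing else}, so that the total count decomposes exactly into the stated sum. First I would recall from the discussion preceding Theorem~\ref{T-PHJG1} that a triple $(a,b,c)$ is of \textit{type} $i$ (primitive) or a multiple-type $lt_i$ precisely when the vertices $v_a, v_b, v_c$ (resp. $v_{la}, v_{lb}, v_{lc}$) induce a primitive hole in the Jaco graph, and that by Theorem~\ref{T-PHJG1} this happens exactly when the ``outer'' edge $v_av_c$ (resp. $v_{la}v_{lc}$) is present, which in turn is governed by the condition $c \le a + d^+_{J_\infty(1)}(v_a)$. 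Thus membership of a triple in the set of Pythagorean holes of $J^{\ast}_m(1)$ is a monotone condition in $m$: once $m$ is large enough for the outer edge to appear, the hole is present in every larger Jaco graph.

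Next I would argue that taking $m = c + d^+(v_c)$ with $c = \max\{c_i,\dots,c_k,c_r,\dots,c_x\}$ guarantees that $v_c$ (and hence every $v_{c'}$ with $c' \le c$) lies in the graph and, more importantly, that the interval $[1,c]$ of indices is ``saturated'' enough for each relevant outer edge to be realized; here I would invoke the completeness property established in the proof of Theorem~\ref{T-PHJG1}, namely that the subgraph induced by a consecutive block of vertices sitting between the endpoints of an existing edge is complete, so that once the outer edge of a triple exists the corresponding triangle is automatically a primitive hole. This shows that $h^p(J^{\ast}_{c+d^+(v_c)}(1))$ is at least the right-hand side. For the reverse inequality I would note that by definition $h^p$ counts \emph{all} Pythagorean holes, and every Pythagorean hole of a Jaco graph is, by the classification of triples realizable as degree triples of primitive holes (the $t_i$, their multiples $lt_i$, and the Euclid-formula extras $e_m$), accounted for by exactly one of the summands $h^p_{t_l}$ or $h^p_{e_m}$; disjointness follows because a primitive hole occupies a fixed vertex set $\{v_a,v_b,v_c\}$ whose degree triple determines a unique triple up to the labelling already fixed by $a<b<c$, and the hypothesis that no listed triple is a multiple of another rules out double-counting across the families. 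Summing over the two families then gives the displayed identity.

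The main obstacle I anticipate is making precise the claim that $m = c + d^+(v_c)$ is simultaneously large enough for \emph{every} triple in the list (not just the one achieving the maximum $c$) to have its outer edge present: a triple with a smaller $c_\alpha$ but, say, a small gap $c_\alpha - a_\alpha$ needs its own edge-existence check $c_\alpha \le a_\alpha + d^+_{J_\infty(1)}(v_{a_\alpha})$, and one must verify this is implied either by the triple being of a listed type (which was \emph{defined} by exactly this realizability) or by monotonicity in the Jaco structure. I would handle this by first reducing to the case of each individual type via the additivity of the count over disjoint vertex-triples, then quoting Corollary~\ref{C-PHJG1c} and Theorem~\ref{T-PHJG2} for the type-$1$ contribution and the analogous (stated-as-following) reasoning for the other $t_i$ and the $e_m$, so that the only genuinely new content is the bookkeeping that the union is disjoint and exhaustive at the single scale $m = c + d^+(v_c)$.
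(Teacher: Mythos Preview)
Your proposal is correct and follows the same route as the paper, which offers no detailed argument but simply records (in the line preceding the theorem) that the result is a direct consequence of Theorem~\ref{T-PHJG1} and Theorem~\ref{T-PHJG2}. The obstacle you anticipate is in fact a non-issue, since the displayed identity is merely a partition of the total Pythagorean-hole count into disjoint type-wise counts and therefore holds regardless of whether any individual summand vanishes at the scale $c + d^+(v_c)$.
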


\ni The following results is a direct consequence of Theorem \ref{T-PHJG1} and Corollary \ref{C-PHJG1c}.

\begin{cor}
Let $t_i $ (or $e_m$) denotes a Pythagorean triple $(a_{\alpha},b_{\alpha},c_{\alpha})$, then $h^{p}_{t_i}(J_n^{\ast}(1))= \lfloor \frac{n}{c_{\alpha}+d^+(v_{c_{\alpha})}}\rfloor$ or $h^{p}_{e_m}(J_n^{\ast}(1))= \lfloor \frac{n}{c_{\alpha}+d^+(v_{c_{\alpha})}}\rfloor$.  
\end{cor}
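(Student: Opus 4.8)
The plan is to reduce this corollary directly to the two cited results. First I would fix the Pythagorean triple $(a_\alpha, b_\alpha, c_\alpha)$ denoted $t_i$ (the argument for $e_m$ is verbatim the same, so I would handle both simultaneously and only write it once). By Theorem \ref{T-PHJG1}, starting from the single base Pythagorean hole on the vertices $v_{a_\alpha}, v_{b_\alpha}, v_{c_\alpha}$ in the smallest Jaco graph that contains it, every scaled triple $l t_i = (l a_\alpha, l b_\alpha, l c_\alpha)$ yields a Pythagorean hole on $v_{l a_\alpha}, v_{l b_\alpha}, v_{l c_\alpha}$ in $J^\ast_n(1)$ precisely once the edge $v_{l a_\alpha} v_{l c_\alpha}$ appears, i.e.\ once $n$ is large enough that $l c_\alpha \le l a_\alpha + d^+_{J_\infty(1)}(v_{l a_\alpha})$; and by Theorem \ref{T-PHJG1} this one edge condition forces the whole triangle. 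So the count $h^p_{t_i}(J^\ast_n(1))$ equals the number of positive integers $l$ for which the hole on the $l$-th scaled triple has already ``switched on'' in $J^\ast_n(1)$.

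Next I would pin down exactly when the $l$-th copy switches on. The key structural fact (the same one underlying Corollary \ref{C-PHJG1c} and Theorem \ref{T-PHJG2} for the $(3,4,5)$ case) is that the relevant threshold for the $l$-th copy is $n = l\big(c_\alpha + d^+(v_{c_\alpha})\big)$: once $v_{l c_\alpha}$ has acquired its full out-degree — equivalently once the index $n$ reaches $l c_\alpha + d^+_{J_\infty(1)}(v_{l c_\alpha})$, which by the scaling/self-similarity behaviour of Jaco graphs used in Corollary \ref{C-PHJG1c} coincides with $l(c_\alpha + d^+(v_{c_\alpha}))$ — the edge $v_{l a_\alpha} v_{l c_\alpha}$ is present and the Pythagorean hole exists. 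Therefore the number of values of $l \in \N$ with $l\big(c_\alpha + d^+(v_{c_\alpha})\big) \le n$ is exactly $\big\lfloor \tfrac{n}{c_\alpha + d^+(v_{c_\alpha})} \big\rfloor$, which is the claimed formula. The identical computation with $e_m$ in place of $t_i$ gives the second formula.

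The main obstacle I anticipate is justifying cleanly that the switch-on threshold for the $l$-th scaled copy is exactly $l\big(c_\alpha + d^+(v_{c_\alpha})\big)$ and not merely bounded by it — i.e.\ making precise the self-similar growth of $d^+_{J_\infty(1)}(v_{l m})$ relative to $d^+_{J_\infty(1)}(v_m)$ that is implicit in Theorem \ref{T-PHJG1} and Corollary \ref{C-PHJG1c}. Concretely, one needs that the edge $v_{l a_\alpha} v_{l c_\alpha}$ first appears in $J^\ast_n(1)$ at precisely $n = l(c_\alpha + d^+(v_{c_\alpha}))$; given Theorem \ref{T-PHJG1} this is the content that makes the floor count tight rather than just a lower bound. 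I would cite Corollary \ref{C-PHJG1c} and Theorem \ref{T-PHJG2} as already having established this pattern for $t_1 = (3,4,5)$, observe that the argument there used no special property of $(3,4,5)$ beyond $c_\alpha$ being the largest entry of the triple, and conclude that it transfers to an arbitrary primitive (or Euclid-generated) Pythagorean triple $(a_\alpha, b_\alpha, c_\alpha)$ without change. The remaining steps — counting admissible $l$ and writing the floor — are then immediate.
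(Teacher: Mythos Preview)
Your approach is essentially the paper's: the paper offers no argument beyond the one-line remark that the corollary ``is a direct consequence of Theorem~\ref{T-PHJG1} and Corollary~\ref{C-PHJG1c},'' and your proposal is precisely an unpacking of that claim, citing the same two results and generalising the $(3,4,5)$ computation to an arbitrary triple $(a_\alpha,b_\alpha,c_\alpha)$. The obstacle you flag --- that the switch-on index for the $l$-th scaled hole must be \emph{exactly} $l\big(c_\alpha+d^+(v_{c_\alpha})\big)$, which in turn requires a scaling identity $d^+_{J_\infty(1)}(v_{lc_\alpha}) = l\,d^+(v_{c_\alpha})$ --- is a genuine gap that the paper also leaves implicit, so you are not missing anything the paper supplies.
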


\section{Conclusion and Scope for Further Studies}

We have discussed particular types of holes called Pythagorean holes of given graphs studied the existence thereof in certain graphs, particularly in set-graphs and Jaco graphs. 

As all  graphs do not contain Pythagorean holes, the study on the characteristics and structural properties of graphs containing Pythagorean holes arouses much interest. The questions regarding the number of Pythagorean holes in a given graph is a parameter that needs to be studied further. 

We proved that the graphical embodiment of all Pythagorean triples, except for $(3,4,5)$, contain exactly one Pythagorean hole. But, some graphs may contain more than one Pythagorean hole. The study on the graphs or graph classes containing more than one Pythagorean classes corresponding to one or more Pythagorean triples demands further investigation. Another interesting related area for further research is to construct the graphical embodiments of other triples and general $n$-tuples and study their characteristics.

The study seems to be promising as it can be extended to certain standard graph classes and certain graphs that are associated with the given graphs. More problems in this area are still open and hence there is a wide scope for further studies. 

It was illustrated that the vertices of a Pythagorean hole hole can be mapped onto Euclidean geometric objects. An imaginative prospect is that these theoretical applications can perhaps find real application in nano technology. It is proposed that this avenue deserves further research.



\end{document}